\documentclass[12pt,reqno]{amsart}
\usepackage{enumerate, latexsym, amsmath, amsfonts, amssymb, amsthm, color}
\def\pmod #1{\ ({\rm{mod}}\ #1)}
\def\Z{\mathbb Z}
\def\N{\mathbb N}
\def\Q{\mathbb Q}

\def\R{\mathbb R}
\def\l{\left}
\def\r{\right}
\def\bg{\bigg}
\def\({\bg(}
\def\){\bg)}
\def\t{\text}
\def\f{\frac}

\def\ls{\leqslant}
\def\gs{\geqslant}

\def\sm{\setminus}

\def\al{\alpha}

\def\ve{\varepsilon}

\def\eq{\equiv}

\theoremstyle{plain}
\newtheorem{theorem}{Theorem}

\newtheorem{lemma}{Lemma}

\theoremstyle{definition}

\theoremstyle{remark}
\newtheorem{remark}{Remark}

 \vspace{4mm}

\begin{document}

\hbox{Preprint}
\medskip

\title
[{Equations over the integer rings of quadratic fields}]
{On Diophantine equations over \\ the integer rings of quadratic fields}

\author
[Z.-W. Sun] {Zhi-Wei Sun}

\address{School of Mathematics, Nanjing
University, Nanjing 210093, People's Republic of China}
\email{zwsun@nju.edu.cn}

\subjclass[2020]{Primary 11U05, 03D35; Secondary 03D25, 11D09, 11R11.}
\keywords{Hilbert's Tenth Problem, Diophantine equation, quadratic field, undecidability.
\newline \indent Supported by the Natural Science Foundation of China (grant no. 12371004).}

\begin{abstract} Let $K$ be any quadratic number field, and let $O_K$ be the ring of algebraic integers in $K$. In 1975 J. Denef proved that Hilbert's Tenth Problem over $O_K$ has a negative solution.
In this paper we establish the following undecidability result: There is no algorithm to decide whether an arbitrarily given polynomial equation $P(z_1,\ldots,z_{16})=0$
(with integer coefficients and 16 unknowns) has solutions over $O_K$.
Moreover, when $K$ is a real quadratic field, we show that $15$ unknowns suffice for undecidability. 
\end{abstract}
\maketitle

\section{Introduction}
\setcounter{lemma}{0}
\setcounter{theorem}{0}
\setcounter{corollary}{0}
\setcounter{remark}{0}

The original Hilbert's Tenth Problem (HTP in short) posed in 1900 asks for an algorithm  to test whether an arbitrary polynomial Diophantine equation
$$P(x_1,\ldots,x_n)=0$$
with integer coefficients has solutions $x_1,\ldots,x_n\in\Z$. This was finally solved negatively by Y. Matiyasevich \cite{M70} in 1970 based on the earlier work \cite{DPR}. The author \cite{S21} proved that
 $\exists^{11}$ over $\Z$ is undecidable, i.e., there is no algorithm to test for any $P(x_1,\ldots,x_{11})\in\Z[x_1,\ldots,x_{11}]$
whether $$\exists x_1,\ldots,x_{11}\in\Z \,[P(x_1,\ldots,x_{11})=0].$$
See also the book \cite{S-book} for a systematic introduction to this 11 unknowns theorem and its applications.

HTP over a ring $R$ asks for an algorithm  to test whether an arbitrary polynomial Diophantine equation
$$P(x_1,\ldots,x_n)=0$$
with coefficients in $R$ has solutions over $R$.

Let $K$ be any number field
and let $O_K$ be the ring of algebraic integers in $K$.
In 1975 J. Denef \cite{D75} proved that HTP over the integer ring $O_K$
 has a negative solution if $[K:\Q]=2$.
Recently, P. Koymans and C. Pagano \cite{KP}, as well as L. Alp\"oge, M. Bhargava, W. Ho
and A. Shnidman \cite{ABHS} proved that HTP over $O_K$ is always unsolvable.

Recently, Y. Matiyasevich and the author \cite{JNT} proved that there is no algorithm to decide whether
for any $P(x_1,\ldots,x_{20})\in\Z[x_1,\ldots,x_{20}]$ the  equation
$P(x_1,\ldots,x_{20})=0$ is solvable over $\Z[i]$.  Y. Ding and J. Li \cite{DL26}
used AI to improve this via replacing $20$ by $18$. 

Let $d\not=0,1$ be a squarefree integer. For the quadratic field $K=\Q(\sqrt d)$, it is well known that
$$O_K=\begin{cases}\{\f{a+b\sqrt d}2:\ a,b\in\Z\ \t{and}\ a\eq b\pmod2\}&\t{if}\ d\eq1\pmod4,
\\\{a+b\sqrt d:\ a,b\in\Z\}&\t{otherwise}.
\end{cases}$$

 In this paper, we establish the following two theorems.
 
 \begin{theorem} \label{Th1.1} Let $d$ be any positive squarefree integer, and let $K$ be the imaginary quadratic field
 $\Q(\sqrt{-d})$. Then there is no algorithm to decide for any $P(x_1,\ldots,x_{16})\in\Z[x_1,\ldots,x_{16}]$ whether $P(x_1,\ldots,x_{16})=0$ for some $x_1,\ldots,x_{16}\in O_K$.
\end{theorem}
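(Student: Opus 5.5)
We reduce to the undecidability of $\exists^{11}$ over $\Z$ from \cite{S21}: given $P(x_1,\ldots,x_{11})\in\Z[x_1,\ldots,x_{11}]$, we build, uniformly and effectively, $Q(z_1,\ldots,z_{16})\in\Z[z_1,\ldots,z_{16}]$ such that $Q=0$ is solvable over $O_K$ if and only if $P=0$ is solvable over $\Z$. The natural shape is $z_1,\ldots,z_{11}$ standing for the $x_j$, with the remaining $5$ unknowns certifying that each $z_j$ lies in $\Z$ (or in a set whose values still determine solvability of $P=0$). Since $K=\Q(\sqrt{-d})$ is imaginary, the norm $N(z)=z\bar z$ is a nonnegative integer, and the unit group is finite ($\{\pm1\}$ except for $d=1,3$). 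The aim is a Diophantine definition of $\Z$ (or $\N$) inside $O_K$ that can be applied to many variables at once using few extra unknowns.

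\textbf{Step 1.} Compress the integrality conditions. Conditions $z_j\in\Z$ for $j\ls 11$ will not be imposed separately. We replace $P$ by an equivalent relation over $\N$, for instance via Lagrange's four squares or by substituting $x_j=u_j-v_j$, and, following the approach of \cite{S21}, pass through the relation-combining technique of Matiyasevich--Robinson, so that only one or two auxiliary quantities need to be forced into $\Z$. Concretely, $P=0$ is rewritten as a single exponential/Pell-type condition, and from that the $11$ unknowns are recovered as $\Z$-values, all controlled by one parameter in $\Z$.

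\textbf{Step 2.} Define $\Z$ in $O_K$ via Pell equations, following Denef \cite{D75}. Over $O_K$, the solutions of $X^2-(a^2-1)Y^2=1$ with $a\in\Z$, $a\gs2$, are exactly $\pm(a+\sqrt{a^2-1})^n$-type solutions lying in $\Z[\sqrt{a^2-1}]$. The reason is that $K(\sqrt{a^2-1})$ is a biquadratic field whose unit rank equals that of $\Q(\sqrt{a^2-1})$, namely $1$, because $K$ is imaginary. Hence solutions over $O_K$ have $X,Y\in\Z$ up to roots of unity and a bounded index. This yields $y\in\Z\iff\exists$ few unknowns in $O_K$ with $X^2-(a^2-1)Y^2=1$ and $y\mid Y$-type congruences, and a single such certificate can bound and integralize the combined parameter from Step~1. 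We use the condition ``$z-\bar z=0$'' only indirectly, since $\bar z$ is not polynomial. Instead, congruences modulo $Y$ with $Y\in\Z$ large, together with norm bounds expressible through $z$ dividing a rational integer, pin $z$ to $\Z$.

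\textbf{Step 3.} Count unknowns to reach $16$: $11$ from \cite{S21}, plus about $2$ for the Pell pair, plus about $3$ for divisibility and bound witnesses (divisibility $A\mid B$ costs one unknown $B=AC$). We merge polynomial equations into one by summing norms, which must be done as $\sum Q_i\bar Q_i$; that is not polynomial, so we use instead that $A^2+B^2=0$ fails to force $A=B=0$ over $O_K$. We therefore use $A^2+\gamma B^2$ with $-\gamma$ not a square in $K$, or equivalently fold equations through $A+\theta B$ with $\theta$ outside $K$-relations.

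\textbf{Main obstacle.} The hardest step will be the bookkeeping of Step~2: showing that only about five extra unknowns certify integrality of all eleven variables simultaneously while handling units for $d=1,3$ and the case $d\eq3\pmod4$. I would first try to make all eleven $x_j$ polynomial functions of Pell-solution coordinates, so that a single integrality certificate suffices.
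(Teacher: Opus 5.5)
Your proposal has a genuine gap, located exactly where you put the ``main obstacle'': nothing in Steps~1--3 actually forces the unknowns into $\Z$ using a bounded number of extra unknowns, and the count $11+5=16$ is asserted rather than derived.

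\textbf{Step~1 is not a construction.} The $\exists^{11}$ theorem of \cite{S21} is already the end product of the Matiyasevich--Robinson compression. Re-encodings via four squares or $x_j=u_j-v_j$ only add unknowns. Moreover, any rewriting carried out over $\Z$ presupposes that the $z_j\in O_K$ are rational integers, which is precisely what must be certified.

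\textbf{The missing idea is Lemma~\ref{Lem-MS}.} With $y=2\prod_{k=1}^n(3z_k+1)$, one has $z_1,\ldots,z_n\in\Z$ if and only if $y+\sum_k z_k/y^k\in\Q$. At no cost in unknowns, this reduces the integrality of all the $z_k$ to the integrality of two elements, $Z_1=y^n$ and $Z_2=y^{n+1}+\sum_k z_ky^{n-k}$.

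\textbf{These two elements are certified together by one shared Pell solution (Lemma~\ref{Lem-z12}).} The unknowns are $v,x_1,x_2,y_0$, subject to $4(2vw_1w_2-y_0)^2-3y_0^2=1$ with $w_j=2(2Z_j+1)^2+1$, and $g(x_j,y_0,Z_j)=3y_0^2(2Z_j+1-x_jy_0)^2+1\in\square$. The argument runs as follows.
\begin{enumerate}
\item The Pell equation is the fixed one $x^2-3y^2=1$, and all its solutions in $O_K$ lie in $\Z$ for every $d$ (Lemma~\ref{Lem-xy}). This disposes of your worry about $d\eq3\pmod4$.
\item Writing $4vw_1w_2=x'+2y_0$ with $(x',y_0)$ that integer solution gives $|w_j|<|y_0|$. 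Hence $|2Z_j+1|$ is at most about $|y_0|^{1/2}$.
\item The square condition, again via Lemma~\ref{Lem-xy}, gives $2Z_j+1-x_jy_0\in\Z$.
\item If $x_j\notin\Z$, this forces $|2Z_j+1|\gs\frac{\sqrt3}{2}|y_0|$, a contradiction.
\item The cases $|y_0|\ls1$ are excluded separately; for $|y_0|=1$ this uses the finite unit group, which is where $d=1,3$ need care.
\end{enumerate}

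\textbf{The count also does not fit your budget.} The side condition $v\ne0$ is indispensable. For $v=0$ the Pell equation gives $y_0=\pm1$, and then $x_j=y_0(2Z_j+1)$ makes $g(x_j,y_0,Z_j)=1$ a square for every $Z_j\in O_K$.
\begin{itemize}
\item The paper expresses nonzeroness by divisibility: $S\ne0$ iff $S\mid(2w+1)(3w+1)$ for some $w\in\Z$ (Lemma~\ref{Lem-DL}). Over $O_K$ the right-hand side is never $0$.
\item Lemma~\ref{Lem-ST} packs the two square conditions and this divisibility into one equation with one further unknown $m$. It is applied to $9g(x_1,y_0,Z_1)$ and $g(x_2,y_0,Z_2)$, which differ modulo $3$.
\item That makes six extra unknowns, not five. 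To reach $16$, the paper therefore starts not from $\exists^{11}$ but from the $10$-unknown form of \cite{S21} with side condition $z_{10}\ne0$ (Theorem~\ref{Th-S}).
\item It takes $S=vz_{10}$, so a single divisibility enforces both $v\ne0$ and $z_{10}\ne0$. This gives $10+4+2=16$.
\end{itemize}
Starting from eleven unknowns as you propose, this machinery gives $17$, and your proposal offers no alternative mechanism that saves an unknown.

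Your Step~3 device for merging equations, $A^2+\gamma B^2$ with $-\gamma$ a non-square in $K$, is correct. It is exactly Lemma~\ref{Lem-0} with $\gamma=d+1$.
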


\begin{theorem} \label{Th1.2} Let $d>1$ be a squarefree integer, and let $K$ be the real quadratic field
 $\Q(\sqrt{d})$. Then there is no algorithm to decide for any $P(x_1,\ldots,x_{15})\in\Z[x_1,\ldots,x_{15}]$ whether $P(x_1,\ldots,x_{15})=0$ for some $x_1,\ldots,x_{15}\in O_K$.
\end{theorem}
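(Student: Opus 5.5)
The plan is to reduce to the 11 unknowns theorem of \cite{S21}: $\exists^{11}$ over $\Z$ is undecidable. Given $P(x_1,\ldots,x_{11})\in\Z[x_1,\ldots,x_{11}]$, I will construct effectively a polynomial $Q(z_1,\ldots,z_{15})\in\Z[z_1,\ldots,z_{15}]$ such that $Q=0$ is solvable over $O_K$ if and only if $P=0$ is solvable over $\Z$. The key ingredient is a Diophantine definition of $\Z$ inside $O_K$ that costs only four extra unknowns beyond the eleven original ones.

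\textbf{Step 1: defining $\Z$ in $O_K$.} For real $K=\Q(\sqrt d)$ I use Denef's Pell-equation method. Fix an effectively computable nontrivial solution of a Pell-type equation $x^2-(a^2-1)y^2=1$ whose solutions in $O_K$ form a group of rank one, just as over $\Z$; the exact choice of $a$ is to be settled. The idea is that an element $z\in O_K$ lies in $\Z$ if and only if $z$ (or a small expression in $z$) is congruent modulo a suitable element to the $y$-coordinate of such a solution with $y\in\Z$. To save unknowns I will instead use the following simpler test, which I expect suffices in the real case:
\[
z\in\Z \iff \exists\, u,v\in O_K\ \bigl[\,u^2-(z^2-1)v^2=1 \ \text{ together with a norm or trace condition}\,\bigr].
\]
Here the condition "$z$ is rational" is encoded through $\sigma(z)=z$, with $\sigma$ the Galois conjugation. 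Since $z-\sigma(z)$ is not polynomial in $z$ over $O_K$, the encoding must go through divisibility and size constraints. This is why I expect about four auxiliary unknowns.

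\textbf{Step 2: sharing the auxiliary unknowns.} Applying Step 1 to each $x_i$ separately would be far too costly. Instead I will require all $x_1,\ldots,x_{11}$ to be rational simultaneously, using a single certificate. Concretely, I take $w=\sum_i x_i^2 t^i$, or a product of the form $\prod_i(1+x_i^2)$, and use a finite combination $C$ that forces each $x_i\in\Z$ once $C$ is known to be rational. This forcing is possible because the $x_i$ are algebraic integers and $K$ has rank at most one over the relevant subring. Ideally the trick encodes the coordinates of $x\in O_K$ by two rational integers and then uses the positivity, or sum-of-four-squares, representation from \cite{S21}. The equation $Q$ is then the sum of squares of $P$ and the certificate equations; in the imaginary case the sum of squares is replaced by the standard norm trick $A^2+\delta B^2$ with $\delta$ a non-norm.

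\textbf{Step 3: counting unknowns.} The count is $11+4=15$ in the real case. For imaginary $K=\Q(\sqrt{-d})$ the unit group is finite, so Pell equations over $O_K$ have rank zero and do not directly give $\Z$. I will instead use Denef's approach of working over $\Q(\sqrt{-d},\sqrt{a^2-1})$, or a norm equation with $x\bar x$. This requires one extra unknown to pass from $O_K$ to its real part, giving $16$.

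The main obstacle is Step 1: producing an existential definition of $\Z$ (or of $\N$) in $O_K$ with only four or five unknowns, uniformly in $d$. I would first try to adapt the argument of \cite{JNT} and \cite{DL26} for $\Z[i]$. There $\Z$ is defined using Pell equations $x^2-(a^2-1)y^2=1$ over $\Z[i]$, combined with the observation that solutions with $a\in\Z$ have $y\in\Z$ up to units. I would optimize that argument and merge its auxiliary variables with those of the 11-unknown representation.
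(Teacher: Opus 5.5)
\textbf{There is a genuine gap: the core construction is never supplied.} Step 1 contains the only concrete test you write down, and it is vacuous. The equation $u^2-(z^2-1)v^2=1$ is solved by $(u,v)=(1,0)$ and by $(u,v)=(z,1)$ for \emph{every} $z\in O_K$. All the content therefore sits in the unspecified ``norm or trace condition''. That condition cannot be imposed as stated, because conjugation $\sigma$ is not a polynomial map on $O_K$.

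Step 2 only asserts that some rational-valued combination $C$ forces all $x_i\in\Z$. You give no construction and no proof, and ``rank at most one over the relevant subring'' is not an argument. So the count $11+4=15$ is not backed by anything.

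Your Step 3 remark is also false. Over imaginary $O_K$ the equation $x^2-3y^2=1$ has infinitely many solutions, namely exactly its rational-integer ones by Lemma \ref{Lem-xy}. This rank-one phenomenon is precisely what Denef's method exploits.

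The missing ideas are the three ingredients the paper uses.

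First, a single-element reduction. For $Y=2\prod_j(x_j^2+1)$ and $W=Y^{n+1}+\sum_j x_jY^{n-j}$, one has $W\in\Q$ iff all $x_j\in\Z$ (Theorem \ref{Th3.1}). This is proved by comparing the two real embeddings. It uses $|\sigma_1(\al)-\sigma_2(\al)|\gs\sqrt d>1$ for $\al\in O_K\sm\Z$ (Lemma \ref{lem:real-separation}) and the bound $|\sigma_i(x_j)|\ls\sigma_i(Y)/4$. These give first $Y\in\Z$, and then each $x_j\in\Z$ inductively via Lemma \ref{Lem3.2}.

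Second, an integrality certificate for $W$ built on Denef's equation $x^2-Ey^2=1$ with $E=A^2-1=dB^2$. The point is that $\sqrt E\in O_K$, so the solution group over $O_K$ still has rank one, and Lemma \ref{lem:denef}(i) gives $y^2\in\N$. Lemmas \ref{lem:linear-pell} and \ref{lem:pell-div} replace the facts about $u_n$ used in Lemma \ref{Lem-z12} to produce integral witnesses. As in Lemma \ref{Lem-z}, this test needs an auxiliary unknown $v\ne0$.

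Third, economy of unknowns. The paper starts from Theorem \ref{Th-S} (ten unknowns with $z_{10}\ne0$), not from the plain eleven-unknown theorem. Then, as in the proof of Theorem \ref{Th1.1}, the nonvanishing of $v$ and of $z_{10}$ is encoded by the single divisibility $vz_{10}\mid(2w+1)(3w+1)$ (Lemma \ref{Lem-DL}). This divisibility, together with two square conditions, becomes one equation $F=0$ in one further unknown $m$ (Lemma \ref{Lem-ST}). This yields $10+5=15$.

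Starting from $\exists^{11}$ as you do, the same machinery would need its own unknown for the nonvanishing condition and would naturally give $11+5=16$. Reaching $15$ along your route would require an additional idea you have not indicated.
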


To prove Theorems \ref{Th1.1} and \ref{Th1.2}, we need the following key result
which follows from Sun \cite[Theorem 1.1(ii)]{S21}.

\begin{theorem}[Sun \cite{S21}] \label{Th-S} There is no algorithm to decide for any $P(z_1,\ldots,z_{10})\in\Z[z_1,\ldots,z_{10}]$ whether 
$P(z_1,\ldots,z_{10})=0$ for some $z_1,\ldots,z_{10}\in\Z$ with $z_{10}\not=0$.
\end{theorem}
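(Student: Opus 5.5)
Theorem \ref{Th-S} is stated as following from \cite[Theorem 1.1(ii)]{S21}, so the plan is to derive it from that result rather than reprove the 11 unknowns theorem. Theorem 1.1(ii) of \cite{S21} says, roughly, that there is no algorithm to decide for $P(z_1,\ldots,z_9,z)\in\Z[z_1,\ldots,z_9,z]$ whether $P(z_1,\ldots,z_9,z)=0$ for some $z_1,\ldots,z_9\in\Z$ and $z\in\Z\setminus\{0\}$. In other words, one unknown is required to be nonzero. I expect this is exactly the shape used there: a Diophantine representation of an arbitrary r.e. set with $9$ free integer unknowns and one auxiliary unknown $z$ constrained to be nonzero. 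Examples are $z=$ a product of quantities that must be positive, or the condition $z\ne0$ arising from a Lucas-sequence or Pell-type relation.

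The derivation then consists of the following steps.

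\textbf{Step 1.} Fix a non-recursive r.e. set $A\subseteq\N$. By \cite[Theorem 1.1(ii)]{S21}, there is a polynomial $P_A(a,z_1,\ldots,z_9,z_{10})$ with integer coefficients such that
$$a\in A\iff \exists z_1,\ldots,z_{10}\in\Z\,[z_{10}\ne0\ \wedge\ P_A(a,z_1,\ldots,z_{10})=0].$$

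\textbf{Step 2.} For each $a\in\N$, substitute $a$ to get $P_a(z_1,\ldots,z_{10})=P_A(a,z_1,\ldots,z_{10})\in\Z[z_1,\ldots,z_{10}]$. This map is computable.

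\textbf{Step 3.} Suppose an algorithm decided whether $P(z_1,\ldots,z_{10})=0$ has a solution over $\Z$ with $z_{10}\ne0$. Applying it to $P_a$ would decide membership in $A$, which is a contradiction.

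The only genuine point to check is that the cited theorem really provides the nonzero condition on a single designated unknown, with exactly $9+1$ unknowns and no hidden extra variable. If instead \cite{S21} phrases it with the parameter $a$ counted among the unknowns, or states the condition as $z>0$, I would adjust as follows. Positivity is harmless, since $z>0$ can be replaced by $z\ne0$ upon replacing $P$ by $P(z_1,\ldots,z_9,z_{10}^2)$, because $z_{10}\ne0\iff z_{10}^2>0$. This does not change the number of unknowns.

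I expect this bookkeeping to be the main obstacle, namely matching the precise form of \cite[Theorem 1.1(ii)]{S21} to the statement. The rest is the standard reduction from a non-recursive r.e. set.
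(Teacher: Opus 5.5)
Your proposal is correct and takes the same approach as the paper, which gives no argument beyond citing \cite[Theorem 1.1(ii)]{S21}. Your Steps 1--3 (specialize the representing polynomial of a non-recursive r.e. set at $a$) are the standard way such an undecidability statement follows from that result, and your fallback of replacing $z_{10}$ by $z_{10}^2$, in case the cited condition is positivity, is also valid.
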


Theorems \ref{Th1.1} and \ref{Th1.2} will be proved in Sections 2 and 3, respectively.

\section{Proof of Theorem \ref{Th1.1}}
\setcounter{lemma}{0}
\setcounter{theorem}{0}
\setcounter{corollary}{0}
\setcounter{remark}{0}
\setcounter{equation}{0}

We need the following simple lemma (cf. \cite[Lemma 5]{DL26}) which follows from the Chinese Remainder Theorem.

\begin{lemma} \label{Lem-DL} An integer $m$ is nonzero if and only if $m\mid(2w+1)(3w+1)$
for some $w\in\Z$.
\end{lemma}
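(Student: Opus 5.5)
The plan is to prove the two directions separately; the nontrivial one is an explicit Chinese Remainder construction.

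\emph{If $m\mid(2w+1)(3w+1)$ for some $w\in\Z$, then $m\neq0$.} Suppose $m=0$. Since $0$ divides only $0$, we would need $(2w+1)(3w+1)=0$. The only rational roots of this product are $w=-1/2$ and $w=-1/3$, neither of which is an integer. So no integer $w$ works when $m=0$.

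\emph{If $m\neq0$, then some $w\in\Z$ gives $m\mid(2w+1)(3w+1)$.} Replacing $m$ by $|m|$, we may assume $m\gs1$, and write $m=2^a3^bn$ with $\gcd(n,6)=1$. We choose $w$ by three congruence conditions.
\begin{itemize}
\item Take $w\eq0\pmod{3^b}$. Then $2w+1\eq1\pmod 3$, and $3^b\mid w$ gives $3w+1\eq1\pmod{3^b}$. So neither factor is divisible by $3$, and $3^b$ cannot divide the product unless $b=0$. This shows the $3$-part needs more care; the correct choice is $2w+1\eq0\pmod{3^b}$. This is solvable because $2$ is invertible modulo $3^b$: take $w\eq(3^b-1)/2\pmod{3^b}$.
\item Take $3w+1\eq0\pmod{2^a}$, which is solvable because $3$ is invertible modulo $2^a$.
\item Take $2w+1\eq0\pmod n$, which is solvable because $2$ is invertible modulo $n$ (as $n$ is odd).
\end{itemize}

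The moduli $2^a$, $3^b$ and $n$ are pairwise coprime, so the Chinese Remainder Theorem gives a single integer $w$ satisfying all three conditions. Then $3^bn\mid 2w+1$ and $2^a\mid 3w+1$. Multiplying, $m=2^a3^bn$ divides $(2w+1)(3w+1)$.

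The only real point in the argument is seeing that neither factor alone suffices. The factor $2w+1$ is always odd, so it cannot absorb the power of $2$ in $m$. The factor $3w+1$ is never divisible by $3$, so it cannot absorb the power of $3$. Splitting the prime powers of $m$ between the two factors handles this, and the rest is routine.
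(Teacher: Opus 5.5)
Your proof is correct and is the Chinese Remainder Theorem argument the paper has in mind: the odd part $3^bn$ of $m$ is absorbed by $2w+1$ and the $2$-part $2^a$ by $3w+1$, and the converse uses that $(2w+1)(3w+1)\neq0$ for $w\in\Z$. In a final write-up, delete the abandoned choice $w\equiv0\pmod{3^b}$ from the first bullet; you could also merge $3^b$ and $n$ into the single odd modulus $3^bn$.
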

\begin{remark} S. P. Tung \cite{T85} observed that an integer $m$ is nonzero if and only if $m=(2x+1)(3y+1)$ for some $x,y\in\Z$.
\end{remark}

We also need the following general lemma.

\begin{lemma}\label{Lem-ST} Let $K$ be a number field, and let $A_1,A_2,S,T\in O_K$ with $A_1\not=A_2$
and $T\not=0$. Then
$$A_1\in\square\land A_2\in\square\land S\mid T$$
if and only if for some $m\in O_K$ we have
\begin{equation}\label{ST} F(A_1,A_2,S,T,m)=0
\end{equation}
where $\square=\{\al^2:\ \al\in O_K\}$ and
\begin{equation}\label{F}F(A_1,A_2,S,T,m)=(T-mS)^4-2(A_1+A_2)S^2(T-mS)^2+(A_1-A_2)^2S^4.
\end{equation}
\end{lemma}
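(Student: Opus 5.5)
Write $X=T-mS$. The polynomial $X^4-2(A_1+A_2)S^2X^2+(A_1-A_2)^2S^4$ is the norm-form polynomial
$$\prod_{\pm,\pm}\bigl(X\pm\sqrt{A_1}\,S\pm\sqrt{A_2}\,S\bigr).$$
Indeed, $(X^2-(A_1+A_2)S^2)^2-4A_1A_2S^4$ expands to the displayed quartic. The plan is to prove both directions using this factorization over $\overline{\mathbb Q}$. Throughout, $S\neq 0$ may be assumed when useful; it is forced, since if $S=0$ then \eqref{ST} reads $T^4=0$, contradicting $T\neq0$.

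\emph{Forward direction.} Suppose $A_1=\alpha_1^2$ and $A_2=\alpha_2^2$ with $\alpha_i\in O_K$, and $T=kS$ with $k\in O_K$. Choose $m=k-\alpha_1-\alpha_2\in O_K$. Then $X=T-mS=(\alpha_1+\alpha_2)S$, which is a root of the factor $X-\alpha_1S-\alpha_2S$, so $F=0$. Note that $S\neq0$ here because $S\mid T$ and $T\neq0$.

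\emph{Converse direction.} Suppose $F(A_1,A_2,S,T,m)=0$ for some $m\in O_K$. As noted, $S\neq0$. Set $y=X/S\in K$. Dividing by $S^4$ gives
$$y^4-2(A_1+A_2)y^2+(A_1-A_2)^2=0.$$
This is a monic polynomial with coefficients in $O_K$, so $y$ is integral over $O_K$. Since $y\in K$ and $O_K$ is integrally closed, $y\in O_K$. Hence $S\mid X=T-mS$, and therefore $S\mid T$. This step is routine.

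\emph{Squares.} It remains to show that $A_1$ and $A_2$ are squares in $O_K$. From the equation, $(y^2-A_1-A_2)^2=4A_1A_2$.

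\begin{itemize}
\item Rewrite it as $(y^2+A_1-A_2)^2=4A_1y^2$. This holds because $(y^2+A_1-A_2)^2-4A_1y^2=y^4-2(A_1+A_2)y^2+(A_1-A_2)^2$.
\item If $y\neq0$, then $A_1=\bigl((y^2+A_1-A_2)/(2y)\bigr)^2$ is a square in $K$.
\item If $y=0$, the equation gives $(A_1-A_2)^2=0$, contradicting $A_1\neq A_2$. So $y\neq0$ always, and this is where the hypothesis $A_1\neq A_2$ is used.
\end{itemize}

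So $A_1=\beta^2$ with $\beta\in K$. Since $\beta$ is a root of the monic polynomial $t^2-A_1\in O_K[t]$, integral closedness gives $\beta\in O_K$. Symmetrically, $(y^2+A_2-A_1)^2=4A_2y^2$ gives $A_2=\gamma^2$ with $\gamma\in O_K$.

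The only delicate point is the identity $(y^2+A_1-A_2)^2=4A_1y^2$, which must be verified by expansion, together with the exclusion of $y=0$. Everything else follows from integral closedness of $O_K$.
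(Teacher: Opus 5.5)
Your proof is correct, and it is more self-contained than the paper's. The paper proves the lemma by reduction. For $S\neq0$ it simply invokes \cite[Lemma 3.1]{JNT}. Its only new observation is that when $S=0$ both sides of the equivalence fail, because $0\nmid T$ and $F(A_1,A_2,0,T,m)=T^4\neq0$ for $T\neq0$.

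You make the same observation to force $S\neq0$, but then you prove the cited case directly. Putting $y=(T-mS)/S$ turns $F=0$ into a monic equation over $O_K$, so integral closedness gives $y\in O_K$ and hence $S\mid T$. The identity $(y^2+A_1-A_2)^2=4A_1y^2$, together with its mirror with $A_1,A_2$ swapped, yields the explicit square root $(y^2+A_1-A_2)/(2y)\in K$ of $A_1$. That root is integral and so lies in $O_K$.

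The paper's route is shorter. Yours makes visible three things. First, $T\neq0$ is used only to exclude $S=0$. Second, $A_1\neq A_2$ is used only to exclude $T-mS=0$. Third, the sole arithmetic input is that $O_K$ is integrally closed, so your argument works verbatim in any integrally closed domain of characteristic different from $2$.
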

\begin{proof}.
This is \cite[Lemma 3.1]{JNT} if we replace the condition $T\not=0$ by $S\not=0$.
If $S=0$ and $T\not=0$, then $S\nmid T$ and also $F(A_1,A_2,S,T,m)=T^4\not=0$ for any $m\in O_K$.
So the desired result follows.
\end{proof}

In this section, from now on, we fix a positive squarefree integer $d$, and let $K=\Q(\sqrt{-d})$
and $\square=\{\al^2:\ \al\in O_K\}$.

\begin{lemma} \label{Lem-0} For any $x,y\in O_K$, we have
\begin{equation}x=0\land y=0\iff x^2+(d+1)y^2=0.
\end{equation}
\end{lemma}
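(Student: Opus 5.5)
The forward direction is trivial, so the content is the converse: if $x^2+(d+1)y^2=0$ with $x,y\in O_K$, then $x=y=0$. The plan is a contradiction argument. Suppose $y\neq0$ and divide by $y^2$ in $K$. Then $t=x/y\in K$ satisfies $t^2=-(d+1)$. We will show that $-(d+1)$ is not a square in $K=\Q(\sqrt{-d})$, so this is impossible. Hence $y=0$, and then $x^2=0$ forces $x=0$ because $O_K$ is an integral domain.

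To show $-(d+1)\notin K^2$, use the fact that the squares of $K^\times$ that lie in $\Q$ are exactly the elements of $\Q^{\times2}\cup -d\,\Q^{\times2}$. Indeed, if $(a+b\sqrt{-d})^2=a^2-db^2+2ab\sqrt{-d}$ is rational, then $ab=0$. This gives either $a^2\in\Q^{\times2}$ or $-db^2\in-d\Q^{\times2}$. So it suffices to rule out two cases.

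\begin{enumerate}[(i)]
\item $-(d+1)=r^2$ with $r\in\Q$. This is impossible, since the left side is negative because $d>0$.
\item $-(d+1)=-d\,s^2$ with $s\in\Q$, that is, $d(d+1)=(ds)^2$ is the square of a rational number, hence of an integer. This is impossible because $d^2<d(d+1)<(d+1)^2$.
\end{enumerate}

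An alternative route avoids the field structure and uses norms. Apply the norm $N=N_{K/\Q}$ to $x^2=-(d+1)y^2$ to get $N(x)^2=(d+1)^2N(y)^2$, so $N(x)=(d+1)N(y)$, since norms are nonnegative in an imaginary quadratic field. But the first argument is already short and complete, so I would present that one.

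The only step needing any care is the description of rational squares in $K$ together with the strict inequality in case (ii). That inequality is where the specific choice of $d+1$, rather than an arbitrary constant, is used. There is no real obstacle.
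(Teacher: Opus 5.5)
Your proof is correct and follows the paper's argument: you reduce to $-(d+1)$ being a square in $K$, write it as $(a+b\sqrt{-d})^2$ with $ab=0$, and rule out $b=0$ by sign. The only deviation is in excluding $d(d+1)$ as a perfect square: you use $d^2<d(d+1)<(d+1)^2$ instead of the paper's coprimality-plus-squarefree step, which is slightly cleaner and does not need $d$ squarefree. Your norm aside is not a proof as it stands, since $N(x)=(d+1)N(y)$ is not contradictory on its own (e.g.\ $d=1$, $x=1+i$, $y=1$), so you are right not to rely on it.
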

\begin{proof} If $x^2+(d+1)y^2=0$ but $x$ or $y$ is nonzero,
then $x,y\not=0$ and $-d-1=(x/y)^2$ is a square in $K=\Q(\sqrt{-d})$.

Suppose that $-d-1=(a+b\sqrt{-d})^2$ for some $a,b\in\Q$. Then $a^2-db^2=-d-1$ and $2ab=0$.
If $b=0$, then $a^2=-d-1<0$ which is impossible. Thus $a=0$ and $d(d+1)=d^2b^2$.
Hence both $d$ and $d+1$ are integer squares. As $d$ is squarefree, we must have $d=1$ and hence $d+1=2$. Since $2$ is not an integer square, we get a contradiction.

In view of the above, we have completed the proof of Lemma \ref{Lem-0}.
\end{proof}

The following lemma in the case $d=1$ is essentially due to J. Denef \cite{D75}.

\begin{lemma} \label{Lem-xy} Suppose that $x,y\in O_K$ and $x^2-3y^2=1$. Then we must have $x,y\in\Z$.
\end{lemma}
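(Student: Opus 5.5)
The plan is to pass to the compositum $L=K(\sqrt3)=\Q(\sqrt{-d},\sqrt3)$. This is a totally complex quartic field, except in the degenerate case where $-3d$ is a square. That happens only for $d=3$, and then $L=\Q(\sqrt3,i)$ is still a totally complex biquadratic field. In $L$ I will work with the unit
$$u=x+y\sqrt3,\qquad u'=x-y\sqrt3,\qquad uu'=x^2-3y^2=1.$$
Since $x,y\in O_K$, both $u$ and $u'$ are algebraic integers, so $u\in O_L^\times$. Let $\tau$ be the automorphism of $L$ that fixes $\sqrt3$ and acts on $K$ as complex conjugation. Set $\bar u=\tau(u)=\bar x+\bar y\sqrt3$; this is again a unit.

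The key step is to show that $w=u/\bar u$ is a root of unity. The four complex embeddings of $L$ come in complex-conjugate pairs, and for any embedding $\iota$ the embedding $\iota\circ\tau$ is the complex conjugate of $\iota$. This holds because $\tau$ restricted to $K$ is complex conjugation and $\tau$ fixes the real number $\sqrt3$ (under either real choice of its image). Hence $|\iota(u)|=|\iota(\bar u)|$ for every $\iota$, so every conjugate of the algebraic integer $w$ has absolute value $1$. By Kronecker's theorem $w=\zeta$ is a root of unity lying in $L$. In particular $\zeta^{N}=1$ for some fixed $N$ dividing $24$: the roots of unity in a biquadratic field have order dividing $8$ or $12$, and $N$ can be taken independent of $d$.

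It remains to show $\zeta=1$. Once that is known, $u=\tau(u)$ gives $x+y\sqrt3=\bar x+\bar y\sqrt3$. Since $\{1,\sqrt3\}$ is a basis of $L$ over $K$, this forces $x=\bar x$ and $y=\bar y$. So $x,y\in O_K\cap\R=\Z$.

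To exclude $\zeta\ne1$, I will also use the automorphism $\sigma$ of $L$ over $K$ with $\sigma(\sqrt3)=-\sqrt3$. It satisfies $\sigma(u)=u'=1/u$ and commutes with $\tau$, so $\sigma(\zeta)=\zeta^{-1}$. Combined with $\tau(\zeta)=\bar u/u=\zeta^{-1}$, the element $\zeta$ is inverted by both $\sigma$ and $\tau$. Therefore $\zeta$ is fixed by $\sigma\tau$, i.e.\ $\zeta$ lies in the quadratic field $\Q(\sqrt{-3d})$, and $\zeta^{-1}=\tau(\zeta)$ is its complex conjugate. This leaves only finitely many candidates: $\zeta=-1$ in general, $\zeta=\pm i$ when $d=3$ (so that $\Q(\sqrt{-3d})=\Q(i)$), and sixth roots of unity when $d=1$ (so that $\Q(\sqrt{-3d})=\Q(\sqrt{-3})$). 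For each candidate, the relation $x+y\sqrt3=\zeta(\bar x+\bar y\sqrt3)$ can be expanded in the $K$-basis $\{1,\sqrt3\}$ after writing $\zeta$ in that basis. Together with $x^2-3y^2=1$, this should lead to a contradiction.

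For example, take $\zeta=-1$. Then $x=-\bar x$ and $y=-\bar y$, so $x,y\in\sqrt{-d}\,\Q$. Writing $x=a\sqrt{-d}$ and $y=b\sqrt{-d}$ gives $-d(a^2-3b^2)=1$, which forces $d=1$ and $a^2-3b^2=-1$ in rationals, i.e.\ $a^2\eq 2b^2$-type obstructions mod $3$ / Hasse-norm arguments show this is impossible since $-1$ is not a norm from $\Q(\sqrt3)$ (it fails $3$-adically). The main obstacle is this finite case check for the exceptional $\zeta$ when $d=1$ or $d=3$. I expect each case to reduce, via the same trick, to showing that some small constant such as $-1$, $\pm2$ or $\pm3$ is not a norm from $\Q(\sqrt3)$ or not a suitable square. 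I would handle these by local (mod $3$ or mod $4$) obstructions.
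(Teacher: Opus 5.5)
Your route differs from the paper's and is sound up to the point where you place $\zeta=u/\bar u$ in $\Q(\sqrt{-3d})$ with $\tau(\zeta)=\zeta^{-1}$. Specifically, $u$ is a unit of the CM field $L$, Kronecker applies, and the $\sigma\tau$ argument is right. One side remark: $L$ is biquadratic for every $d$, and $-3d<0$ is never a square; the only special feature of $d=3$ is $\Q(\sqrt{-3d})=\Q(i)$.

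The problem is that excluding $\zeta\ne1$ is only announced. That step is the only place where the constant $3$ is used, so as written the proof is incomplete. Two things need fixing.

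First, in the case $\zeta=-1$ the step ``forces $d=1$'' is false over $\Q$: take $d=2$, $a=b=\frac12$. You need integrality. A purely imaginary element of $O_K$ lies in $\Z\sqrt{-d}$, and this holds also when $d\eq3\pmod{4}$. Hence $a,b\in\Z$, $d(3b^2-a^2)=1$, so $d=1$ and $a^2\eq-1\pmod{3}$, which is impossible.

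Second, the exceptional roots of unity must actually be excluded. Both cases close by integrality; throughout, compare coefficients of $1$ in the $K$-basis $\{1,\sqrt3\}$ of $x+y\sqrt3=\zeta(\bar x+\bar y\sqrt3)$.
\begin{enumerate}
\item For $d=3$ and $\zeta=\pm i=\pm\frac{\sqrt{-3}}{3}\sqrt3$, you get $x=\pm\sqrt{-3}\,\bar y$. Hence $1=x^2-3y^2=-3(y^2+\bar y^2)\in3\Z$.
\item For $d=1$ and $\zeta=\frac{\alpha}{2}+\frac{\beta i}{2}\sqrt3$ with $\alpha,\beta\in\{\pm1\}$, you get $2x=\alpha\bar x+3\beta i\bar y$. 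Eliminating $\bar x$ with the conjugate equation gives $x=\beta i(2\bar y-\alpha y)$. Hence $1=x^2-3y^2=4(\alpha y\bar y-y^2-\bar y^2)\in4\Z$, since $y\bar y$ and $y^2+\bar y^2$ are rational integers.
\end{enumerate}
This check is essential, not cosmetic. The analogue for $x^2-2y^2=1$ over $\Z[i]$ fails: $x=y=i$ is a solution, with $\zeta=-1$.

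By comparison, the paper never leaves $K$. It writes $x=(a+b\sqrt{-d})/c$ and $y=(s+t\sqrt{-d})/c$, and derives $a_0-db_0=c^2$, $ab=3st$, and the identity $a_0b_0=-3(at-bs)^2\ls0$. A sign argument plus a few congruences mod $3$ then force $b=t=0$. That is shorter and entirely elementary.

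Your argument costs Kronecker's theorem and the Galois structure of $L$. In return it explains why the lemma holds: $u/\bar u$ must be a root of unity in $\Q(\sqrt{-3d})$, which leaves only finitely many configurations. It also pinpoints exactly where $3$ and the special values $d=1,3$ matter.
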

\begin{proof}
Set
$$c=\begin{cases}2&\t{if}\ d\eq3\pmod4,\\1&\t{otherwise}.\end{cases},$$
and write 
$$x=\f{a+b\sqrt{-d}}c\ \ \t{and}\ \ y=\f{s+t\sqrt{-d}}c,$$
where $a,b,s,t$ are integers such that $a\eq b\pmod2$ and $s\eq t\pmod2$ when $c=2$.
As $x^2-3y^2=1$, we have
$$(a+b\sqrt{-d})^2-3(s+t\sqrt{-d})^2=c^2,$$
i.e.,$$ (a^2-3s^2)-d(b^2-3t^2)+2(ab-3st)\sqrt{-d}=c^2.$$
It follows that
\begin{equation}\label{=}a_0-db_0=c^2\ \t{and}\ ab=3st,
\end{equation}
where $a_0=a^2-3s^2$ and $b_0=b^2-3t^2$. In view of this, we have
\begin{equation}\label{0} a_0b_0=-3(at-bs)^2\ls0.
\end{equation}

Assume that $a_0=0$. Then $a^2=3s^2$ and hence $a=s=0$. Note also that $-bb_0=c^2$.
If $c=1$, then $d=1$ and $b^2-3t^2=b_0=-1$ which is impossible since $b^2\not\eq-1\pmod3$.
If $c=2$, then $d$ divides $c^2=4$ which is impossible since $d\eq3\pmod4$.

By the last paragraph, we have $a_0\not=0$. If $a_0<0$, then $b_0\gs0$ by \eqref{0}, and hence
$c^2=a_0-db_0<0$ which is impossible. Thus $a_0>0$ and $b_0\ls0$.

Suppose that $b_0<0$. Then $0<a_0=db_0+c^2<c^2$. Hence $c=2$ and $a_0\in\{1,2,3\}$. 
Note that $a_0=a^2-3s^2\not\eq2\pmod3$. If $a_0=3$, then for some $a'\in\Z$ we have
$3=a_0=(3a')^2-3s^2$ and hence $1=3(a')^2-s^2\eq -s^2\pmod3$ which is impossible.
Therefore   $a_0=1$. Hence
$db_0=a_0-c^2=1-2^2=-3$ and $b_0=a_0b_0=-3(at-bs)^2$.
Thus $d(at-bs)^2=1$ and hence $d=1$ which contradicts $c=2$.

By the above, we must have $b^2-3t^2=b_0=0$. So $b=t=0$. It follows that
$x,y\in\Q\cup O_K=\Z$ as desired. 
\end{proof}

\begin{lemma}\label{Lem-z} Let $z\in O_K$. Then $z\in\Z$ if there are 
$v,x,y\in O_K$ with $v\not=0$ such that $f(v,y,z)=0$ and $g(x,y,z)\in\square$,
where
$$f(v,y,z)=4(2v(2(2z+1)^2+1)-y)^2-3y^2-1$$
and
$$g(x,y,z)=3y^2(2z+1-xy)^2+1.$$
\end{lemma}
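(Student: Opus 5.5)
The plan is to use Lemma \ref{Lem-xy} twice: once on the equation $f(v,y,z)=0$ to make $y$ (and an auxiliary quantity) rational integers, and once on the condition $g(x,y,z)\in\square$ to make $2z+1-xy$ a rational integer. Then a size argument, available because $K$ is imaginary quadratic, forces $z\in\Z$.

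\emph{Step 1.} Put $t=2z+1$, $W=2t^2+1$ and $X=2(2vW-y)\in O_K$. Then $f(v,y,z)=0$ reads $X^2-3y^2=1$, so Lemma \ref{Lem-xy} gives $X,y\in\Z$. Since $X/2\in O_K\cap\Q=\Z$, the number $N:=vW$ lies in $\Z$, because $4N=X+2y$.
\begin{itemize}
\item $y\neq0$: otherwise $X^2=1$ with $X$ even, which is impossible.
\item $N\neq0$: we have $v\neq0$, and $W\neq0$ because $t^2=-1/2$ has no solution with $t\in O_K$.
\item From $(X+2y)(X-2y)=X^2-4y^2=1-y^2$ we get $4N(X-2y)=1-y^2$. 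Since $N\neq0$, we have $y^2\neq1$, and this gives the bound $4|N|\le y^2-1$.
\end{itemize}

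\emph{Step 2.} Suppose $g(x,y,z)=w^2$ with $w\in O_K$, and put $u=t-xy$. Then $w^2-3(yu)^2=1$, so Lemma \ref{Lem-xy} gives $yu\in\Z$. As $y\in\Z\setminus\{0\}$, $u$ is rational, hence $u\in O_K\cap\Q=\Z$. Thus $t=u+xy$ with $u,y\in\Z$, and it suffices to show $x\in\Z$. Note that $t\in\Z$ already forces $z=(t-1)/2\in\Q\cap O_K=\Z$.

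\emph{Step 3: size argument.} Suppose $x\notin\Q$. Then $|\mathrm{Im}\,x|\ge\sqrt d/2$, so $|\mathrm{Im}\,t|=|y|\,|\mathrm{Im}\,x|\ge|y|\sqrt d/2$. Hence
$$|W|=|2t^2+1|\ge2|t|^2-1\ge2(\mathrm{Im}\,t)^2-1\ge\frac{d\,y^2}{2}-1.$$
On the other hand, $v$ is a nonzero algebraic integer in an imaginary quadratic field, so $|v|^2=N_{K/\Q}(v)\ge1$ and $|W|=|N|/|v|\le|N|\le(y^2-1)/4$. Comparing the two bounds gives
$$\frac{d\,y^2}{2}-1\le\frac{y^2-1}{4},$$
which fails for every $d\ge1$ once $|y|\ge2$. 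By Step 1 we indeed have $|y|\ge2$, since $y\neq0$ and $y^2\neq1$. This contradiction shows $x\in\Q\cap O_K=\Z$, hence $t\in\Z$ and $z\in\Z$.

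The main obstacle is Step 3: making sure the inequality $4|N|\le y^2-1$ from the Pell structure of $f$ is sharp enough against the lower bound for $|2t^2+1|$, and that the small values of $y$ are genuinely excluded. In the plan above, $y^2=1$ is excluded by $N\neq0$ and $y=0$ by the parity of $X$. If that inequality turned out too weak, the fallback would be to compare norms, $N_{K/\Q}(W)\mid N^2$, instead of absolute values.
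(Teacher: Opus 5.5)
Your proposal has a genuine gap in Step 1, at the claim ``since $N\neq0$, we have $y^2\neq1$''; that deduction is false. If $y=\pm1$, the identity $4N(X-2y)=1-y^2=0$ together with $N\neq0$ only forces $X=2y$. This is perfectly consistent with $X^2-3y^2=4-3=1$, and it gives $N=vW=(X+2y)/4=y=\pm1$. In this case your bound $4|N|\le y^2-1$ is false, so the conclusion $|y|\ge2$ is unjustified. The case $|y|=1$, $x\notin\Q$ is then left open. The crude comparison in Step 3 only gives $|W|=1/|v|\le1$ against $|W|\ge d/2-1$, and even with the sharp bound $|\mathrm{Im}\,x|\ge\sqrt d/c$ this yields no contradiction for $d=1,3$.

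The missing idea is that in this case $W=2(2z+1)^2+1$ is a unit of $O_K$, because $vW=\pm1$ with $v\in O_K$, and you must show this is impossible. The paper handles it as follows:
\begin{itemize}
\item $W\equiv1\pmod 2$, which excludes the non-real units when $d=1,3$;
\item $W=1$ would force $z=-1/2$;
\item $W=-1$ forces $2z+1=\pm i$, i.e.\ $z=(-1\pm i)/2\notin O_K$.
\end{itemize}
A quick alternative that fits your framework uses $t=2z+1$. From $W\bar W=1$ you get $4|t|^4+4\,\mathrm{Re}(t^2)=0$, so $|t|^4=-\mathrm{Re}(t^2)\le|t|^2$. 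Hence $|t|^2=N_{K/\Q}(t)=1$ and $t^2=-1$, which again gives $z=(-1\pm i)/2\notin O_K$.

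Once this case is added, the rest of your argument is sound and follows the paper's route. You use Lemma~\ref{Lem-xy} on $f$ and on $g$, get $2z+1-xy\in\Z$, and then compare sizes when $x\notin\Q$. For $|y|\ge2$ your bound $4|N|\le y^2-1$ is in fact slightly sharper than the paper's $|w|<|y|$, so the uniform estimate $|\mathrm{Im}\,x|\ge\sqrt d/2$ suffices there.
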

\begin{proof} Suppose that
$f(v,y,z)=0$ and $g(x,y,z)\in\square$ for some $v,y,z\in O_K$ with $v\not=0$.
Set $w=2(2z+1)^2+1$ and $x=2(2vw-y)=4vw-2y$. As $f(v,y,z)=0$, we have
$x^2-3y^2=1$ and hence $x,y\in\Z$ by Lemma \ref{Lem-xy}. Note that $w\not=0$
since $-1/2\not\in O_K$.

 If $y=0$, then $4vw=x=\pm1$ which is impossible since $\pm1/4\not\in O_K$.

Assume that $|y|=1$. Then $x^2=3y^2+1=4$, and hence $x=2y$ since $x+2y=4vw\not=0$.
Thus $4vw=x+2y=4y$ and hence $vw=y\in\{\pm1\}$. So $w$ belongs to the unit group $U(O_K)$ of
the ring $O_K$. Note that $w=2(2z+1)^2+1\eq1\pmod{2}$ in the ring of all algebraic integers. 
If $d=1$, then $U(O_K)=\{pm1,\pm i\}$ and $\pm i\not\eq1\pmod{2}$. If $d=3$ and $\omega=(-1+\sqrt{-3})/2$, then $U(O_K)=\{\pm1,\pm\omega,\omega^2\}$ and
$\omega,\omega^2\not\eq1\pmod{2}.$
It is well known that $U(O_K)=\{pm1\}$ if $d\not=1,3$. 
Thus we must have $w=\pm1$. As $2z+1\ne0$, we have $w\not=1$ and hence $w=-1$.
Thus $(2z+1)^2=-1$ and $2z+1=\pm i$. Since $(-1\pm i)/2$ is not an algebraic integer,
we get a contradiction.

By the last two paragraph, we must have $|y|\gs2$. As $|v|^2=v\bar v\in\{1,2,3,\ldots\}$,
we have $|v|\gs1$. Recall that $x+2y=4vw$. So 
\begin{equation}\label{wy}|w|\ls\f{|x|+2|y|}{4|v|}\ls\f{\sqrt{3y^2+1}+2|y|}4<|y|.
\end{equation}

Write $g(x,y,z)=u^2$ with $u\in O_K$. As $u^2-3(y(2z+1-xy))^2=1$, we have $u,y(2z+1-xy)\in\Z$
by Lemma \ref{Lem-xy}. Recall that $y\in\Z\sm\{0\}$. So $2z+1-xy\in\Q\cap O_K=\Z$. 
If $x\in\Z$, then $2z+1\in \Z$ and hence $z=\f{(2z+1)-1}2\in\Q\cap O_K=\Z$. 

Now suppose that $x\not\in\Z$. Let $c$ take $2$ or $1$ according as $d\eq3\pmod4$ or not.
Observe that
\begin{align*}|2z+1|&\gs|\mathrm{Im}(2z+1)|=|\mathrm{Im}(2z+1-xy+xy)|
\\&=|\mathrm{Im}(xy)|=|\mathrm{Im}(x)|\cdot|y|
\\&\gs\f{\sqrt d}c|y|\gs\f{\sqrt3}2|y|
\end{align*}
and hence
$$|w|\gs2|2z+1|^2-1\gs 2\times\f{3y^2}4-1=\f 32y^2-1>|y|$$
which contradicts \eqref{wy}.

In view of the above, we must have $x\in\Z$ and hence $z\in\Z$ as desired.
This concludes the proof. 
\end{proof}

\begin{remark} Lemma \ref{Lem-z} in the case $d=1$ appeared in \cite[Theorem 1.1]{MS}.
\end{remark}

\begin{lemma} \label{Lem-z12} Let $z_1,z_2\in O_K$, and set $w_j=2(2z_j+1)^2+1$ for $j=0,1$.
Then $z_1,z_2\in\Z$ if and only if there are $v,x_1,x_2,y\in O_K$ with $v\not=0$ such that
\begin{equation}\label{double}
4(2vw_1w_2-y)^2-3y^2-1=0\ \ \t{and}\ \ g(x_1,y,z_1),g(x_2,y,z_2)\in\square.
\end{equation}
Moreover, when $z_1,z_2\in\Z$ we may require further that $v,x_1,x_2,y\in\Z$.
\end{lemma}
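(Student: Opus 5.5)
The plan is to handle the two directions separately. The ``if'' direction reduces to Lemma \ref{Lem-z}, and the ``only if'' direction is a construction using Pell-equation solutions over $\Z$.

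\emph{``If'' direction.} Suppose $v,x_1,x_2,y\in O_K$ with $v\ne0$ satisfy \eqref{double}. For $j=1,2$, put $v_j=vw_{3-j}$. Then
$$4(2v_jw_j-y)^2-3y^2-1=0,$$
so $f(v_j,y,z_j)=0$. We also have $g(x_j,y,z_j)\in\square$.

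We need $v_j\neq0$. This holds because $v\neq0$ and $w_{3-j}\neq0$; the latter is true since $w_{3-j}=0$ would force $(2z_{3-j}+1)^2=-1/2\notin O_K$. Lemma \ref{Lem-z} therefore applies to $(v_j,x_j,y,z_j)$ and gives $z_j\in\Z$ for $j=1,2$.

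\emph{``Only if'' direction.} Let $z_1,z_2\in\Z$. Then $w_1,w_2$ are odd positive integers, hence so is $W:=w_1w_2$. The goal is to find integers $v\ne0$ and $y$ with $(2vW-y)^2\cdot4-3y^2=1$; that is, with $X=4vW-2y$ we need $X^2-3y^2=1$.

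To do this, take the fundamental solution $2+\sqrt3$ and consider $(X_n+y_n\sqrt3)=(2+\sqrt3)^n$. The requirement $X+2y\equiv0\pmod{4W}$ says exactly that $4W\mid X_n+2y_n$ with the quotient nonzero. Note that
$$X_n+2y_n=\frac{(2+\sqrt3)^n-(2-\sqrt3)^n}{\sqrt3}\cdot\Big(\cdots\Big),$$
and more simply we may set $y=2y'$-type constraints aside: we need $X_n\equiv -2y_n \pmod{4W}$.

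To see this is achievable, note that the sequence $(X_n,y_n)$ is purely periodic modulo any modulus $M$, because $2+\sqrt3$ is a unit. Now $X_n+2y_n$ is the $\sqrt3$-free coefficient... This is the step I expect to be the main obstacle: it requires checking which index class works. The cleanest choice is to use negative powers: $(2+\sqrt3)^{-1}=2-\sqrt3$ gives $X=2,y=-1$ with $X+2y=0$. So $X_n+2y_n$ vanishes at $n=-1$. By periodicity modulo $4W$, we get $X_n+2y_n\equiv0\pmod{4W}$ for every $n\equiv-1$ modulo the period. Choosing such an $n$ large makes $X_n+2y_n>0$, hence nonzero, and we set $v=(X_n+2y_n)/(4W)$.

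\emph{Choice of $x_1,x_2$.} With $y=y_n\ne0$ fixed, choose $x_j\in\Z$ so that $2z_j+1-x_jy=0$ would be ideal. In general $y\nmid 2z_j+1$, but this is unnecessary: $g(x_j,y,z_j)=3y^2t^2+1$ must be a square with $t=2z_j+1-x_jy$. So we need $t$ to be a ``$y$-coordinate'' of some Pell solution divided by $y$, with $t\equiv 2z_j+1\pmod y$.

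The simplest route is to arrange $y\mid 2z_j+1$ and take $x_j=(2z_j+1)/y$, giving $g=1=1^2$. Since $2z_j+1$ is arbitrary, however, this forces $y=\pm1$, which is incompatible with the above. Instead I would use the fact that $y_{km}/y_m$ runs over values making $3y_m^2(y_{km}/y_m)^2+1=X_{km}^2$ a square. So I need $y_{km}/y_m\equiv 2z_j+1\pmod{y_m}$. Since $y_{km}/y_m\equiv kX_m^{k-1}\pmod{y_m}$ and $X_m^2\equiv1\pmod{y_m}$, taking $k=2z_j+1$ (odd) gives $y_{km}/y_m\equiv (2z_j+1)X_m^{2z_j}\equiv 2z_j+1\pmod{y_m}$.

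This yields an integer $x_j$ with $t=y_{km}/y_m$, and hence $g(x_j,y,z_j)\in\square$. Here $k$ may be taken positive via $y_{-n}=-y_n$. Together with $v,y$ above, all four of $v,x_1,x_2,y$ are integers, which also gives the ``moreover'' clause.
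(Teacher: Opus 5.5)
Your proof is correct and is essentially the paper's argument. The direction from \eqref{double} to $z_1,z_2\in\Z$ applies Lemma \ref{Lem-z} to $(vw_{3-j},x_j,y,z_j)$ using $w_{3-j}\neq0$. The construction is the same Pell-sequence argument: your $y_n$ is the paper's $u_n$, your $X_n+2y_n=y_{n+1}$ is its $u_{n+1}=4vw_1w_2$, taking $n\equiv-1$ modulo the period mod $4w_1w_2$ is its choice $n=r-1$, and $x_j$ comes from $y_{km}/y_m\equiv k\pmod{y_m}$ with $k=|2z_j+1|$ odd.

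The abandoned false starts in the middle (the display with $(\cdots)$ and the ``main obstacle'' remark) should simply be deleted.
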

\begin{proof} We first prove the ``only if" direction. Suppose that there are $v,x_1,x_2,y\in O_K$
with $v\not=0$ such that \eqref{double} holds. Note that $w_1w_2\not=0$ since $-1/2\not\in O_K$.
As $vw_2\not=0$, $f(vw_2,y,z_1)=0$ and $g(x_1,y,z_1)\in\square$,
by Lemma \ref{Lem-z} we have $z_1\in\Z$. Similarly, since $vw_1\not=0$,
$f(vw_1,y,z_2)=0$ and $g(x_2,y,z_2)\in\square$, we have $z_2\in\square$.

Now we consider the ``if direction". 
As in \cite{MS}, we define
$$u_0=0,\ u_1=1, \ \t{and}\ u_{n+1}=4u_n-u_{n-1}\ \t{for}\ n=1,2,3,\ldots.$$ 
By \cite[Lemmas 8 and 9]{S92}, 
$$u_{n+1}\gs 3^n\ \ \t{and}\ \ u_{n+1}^2-4u_nu_{n+1}+u_n^2=1$$
for all $n\in\N$. For any $k,n\in\Z^+$, by \cite[Lemma 2]{S92} we have 
$$u_{nk}\eq k(u_{n+1}-4u_n)^{k-1}u_n\pmod{u_n^2}$$
and hence $q=u_{kn}/u_n\in\Z^+$. If $k$ is odd, then
$$q\eq ku_{n+1}^{k-1}\eq k\pmod {u_n}$$
since $u_{n+1}^2=1-u_n^2+4u_nu_{n+1}\eq 1\pmod{u_n}$.

Let $z_1,z_2\in\Z$. 
By \cite[Lemma 6]{S92}, there is a positive integer $r$ such that
$$u_r\eq 0\pmod{4w_1w_2}\ \ \t{and}\ \ u_{r+1}\eq1\pmod{4w_1w_2}.$$ 
Then $n=r-1$ and $v=u_{n+1}/(4w_1w_2)$ are positive integers. Set $y=u_n$. Observe that
$$4(2vw_1w_2-y)^2-3y^2=4\l(\f{u_{n+1}}2-u_n\r)^2-3u_n^2=u_{n+1}^2-4u_nu_{n+1}+u_n^2=1.$$

Let $j\in\{1,2\}$. Put $k_j=|2z_j+1|$ and write $2z_j+1=\ve_jk_j$ with $\ve_j\in\{\pm1\}$. Then $q_j=u_{k_jn}/u_n\in\Z$ and $q_j\eq k_j\pmod {u_n}$. Note that
$x_j=\ve_j(k_j-q_j)/u_n\in\Z$ and 
$$2z_j+1-x_jy=\ve_kk_j-\ve_j(k_j-q_j)=\ve_jq_j.$$
Therefore,
\begin{align*}g(x_j,y,z_j)&=3y^2(2z_j+1-x_jy)^2+1=3u_n^2q_j^2+1
\\&=3u_{k_jn}^2+1=(u_{k_jn+1}-2u_{k_jn})^2\in\square.
\end{align*}

In view of the above, we have finished the proof of Lemma \ref{Lem-z12}. 
\end{proof}

We also need the following lemma which is \cite[Theorem 1.2]{MS}.

\begin{lemma} \label{Lem-MS} Let $x_1,\ldots,x_n\in O_K$
and set $y=2\prod_{k=1}^n(3x_k+1)$. Then we have
$$y+\sum_{k=1}^n\f{x_k}{y^k}\in\Q\iff x_1,\ldots,x_n\in\Z.$$
\end{lemma}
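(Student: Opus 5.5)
The ``if'' direction is immediate. If $x_1,\ldots,x_n\in\Z$, then $y=2\prod_{k=1}^n(3x_k+1)$ is an integer. It is nonzero because $3x_k+1\not=0$ (as $-1/3\not\in O_K$), so $y+\sum_k x_k/y^k\in\Q$. For the ``only if'' direction I would work with complex absolute values and imaginary parts, in the same spirit as the proof of Lemma \ref{Lem-z}. Two facts will be used repeatedly:
\begin{itemize}
\item every nonzero $\alpha\in O_K$ has $|\alpha|^2=\alpha\bar\alpha\in\Z^+$, hence $|\alpha|\gs1$;
\item every non-real $\alpha\in O_K$ has $|\mathrm{Im}(\alpha)|\gs\sqrt d/c\gs\sqrt3/2$, with $c$ as in the proof of Lemma \ref{Lem-xy}.
\end{itemize}

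\emph{Step 1: a size bound.} Since each $3x_j+1$ is a nonzero element of $O_K$, we have $|3x_j+1|\gs1$. Hence $|y|\gs2$ and $|y|\gs 2|3x_k+1|\gs 6|x_k|-2$ for every $k$. When $x_k\not=0$ we have $|x_k|\gs1$, so $|x_k|/|y|\ls |x_k|/(6|x_k|-2)\ls 1/4$. Therefore
$$\Big|\sum_{k=1}^n\f{x_k}{y^k}\Big|\ls\sum_{k=1}^n\f14\cdot\f1{2^{k-1}}<\f12.$$

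\emph{Step 2: $y\in\Z$.} Suppose $T=y+\sum_k x_k/y^k$ is rational. Taking imaginary parts gives $|\mathrm{Im}(y)|=|\mathrm{Im}\sum_k x_k/y^k|<1/2$. On the other hand $y=2\alpha$ with $\alpha\in O_K$. If $y$ were non-real, then $|\mathrm{Im}(y)|\gs 2\cdot\sqrt3/2=\sqrt3$, a contradiction. Hence $y$ is real, so $y\in\Q\cap O_K=\Z$, and $|y|\gs2$.

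\emph{Step 3: each $x_k$ is real.} Now $\sum_k x_k/y^k$ is real with $y\in\Z$. Multiplying by $y^n$ and writing $\mathrm{Im}(x_k)=b_k\sqrt d/c$ with $b_k\in\Z$, I get
$$\sum_{k=1}^n b_ky^{n-k}=0.$$
I would then run a base-$|y|$ digit argument. The relation gives $y\mid b_n$. Moreover, if $x_n\not=0$, then
$$|b_n|\ls \f{c}{\sqrt d}|x_n|\ls \f2{\sqrt3}|x_n|<6|x_n|-2\ls|y|,$$
where the strict inequality uses $|x_n|\gs1$. (If $x_n=0$, then $b_n=0$ trivially.) So $b_n=0$. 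Dividing the relation by $y$ and repeating the argument for $b_{n-1},\ldots,b_1$ in turn gives $b_k=0$ for all $k$. Hence every $x_k$ is real, and so $x_k\in\Q\cap O_K=\Z$.

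The main obstacle is Step 1 together with its reuse in Step 3. Everything rests on the inequality $|y|>$ (constant)$\cdot|x_k|$ being strong enough, simultaneously, to make the tail $\sum_k x_k/y^k$ small in Step 2 and to keep each digit $b_k$ below $|y|$ in Step 3. This is exactly where the factor $2$ and the coefficient $3$ in $y=2\prod_k(3x_k+1)$ are needed, and I would check the constants carefully there, including the borderline case $|x_k|=1$.
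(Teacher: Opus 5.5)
Your proof is correct, and the constants you flagged hold. For $|x_k|\gs1$ one has $|x_k|/(6|x_k|-2)\ls\frac14$, with equality only at $|x_k|=1$, so the tail is at most $\frac14(2-2^{1-n})<\frac12$. Also $\frac{2}{\sqrt3}\,t<6t-2$ for every $t\gs1$, which gives $|b_k|<|y|$ in Step 3.

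The paper does not prove this lemma; it quotes it from Matiyasevich and Sun. The natural comparison is therefore its real-quadratic analogue, Theorem \ref{Th3.1} with Lemma \ref{Lem3.2}. Your bound $|\mathrm{Im}\,\alpha|\gs\sqrt3/2$ for non-real $\alpha\in O_K$ is the imaginary counterpart of Lemma \ref{lem:real-separation}, since $|\alpha-\bar\alpha|=2|\mathrm{Im}\,\alpha|$. Your Step 1 is the counterpart of the estimate $|\sigma_i(x_j)|\ls Y_i/4$.

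The two routes differ in two places.
\begin{enumerate}
\item Theorem \ref{Th3.1} assumes only that the polynomial $W=Y^{n+1}+\sum_j x_jY^{n-j}$ is rational. It therefore needs a leading-term comparison of $Y_1^{n+1}$ and $Y_2^{n+1}$ to get $Y\in\Z$. Your hypothesis is already the quotient $y+\sum_k x_k/y^k$, so your Step 2 follows in one line from the tail bound.
\item Once the denominator is an integer, the paper peels coefficients off from the front. It applies Lemma \ref{Lem3.2} to $x_m+\sum_{j>m}x_j/Y^{j-m}$ and re-estimates a tail of size less than $\frac12$ at each stage. You instead clear denominators once, turn realness into the integer relation $\sum_k b_ky^{n-k}=0$, and eliminate the $b_k$ from the back using $y\mid b_n$ and $|b_n|<|y|$.
\end{enumerate}

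Your version does the analytic estimate once and finishes with integer arithmetic, but it relies on explicit coordinates for $\mathrm{Im}(O_K)$. The peeling version is coordinate-free and just iterates one separation lemma. Either finish would work in both the real and the imaginary setting.
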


\medskip
\noindent{\tt Proof of Theorem \ref{Th1.1}}. Let $z_1,\ldots,z_{10}\in O_K$. Then $y=2\prod_{k=1}^{10}(3z_k+1)\not=0$.
By Lemma \ref{Lem-MS}, 
\begin{equation}\label{Z} z_1,\ldots,z_{10}\in\Z
\iff\ \f{Z_2}{Z_1}=y+\sum_{k=1}^{10}\f{z_k}{y^k}\in\Q,
\end{equation}
where $Z_1=y^{10}$ and $Z_2=y^{11}+\sum_{k=1}^{10}z_ky^{10-k}$.
By Lemma \ref{Lem-z12}, $Z_1,Z_2\in\Z$ if and only if there are
$v,x_1,x_2,y_{0}\in O_K$ with $v\not=0$ such that
\begin{equation}\label{w12}4(2vw_1w_2-y_0)^2-3y_0^2-1=0
\end{equation}
and
\begin{equation}\label{g} g(x_1,y_0,Z_1),g(x_2,y_0,Z_2)\in\square,
\end{equation}
where $w_j=2(2Z_j+1)^2+1$ for $j=1,2$; moreover, when $Z_1,Z_2\in\Z$ we may require further that $v,x_1,x_2,y_0\in\Z$.  
When $v,x_1,x_2,y_0\in O_K$, clearly
$9g(x_1,y_0,z_1)\not=g(x_2,y_0,Z_2)$ since $g(x_j,y_0,z_j)\eq1\pmod3$, hence
for any $S,T\in O_K$ with $T\not=0$ we have
\begin{align*}&\ g(x_1,y_0,Z_1),g(x_2,y_0,Z_2)\in\square\land S\mid T
\\\iff&\ 9g(x_1,y_0,Z_1),g(x_2,y_0,Z_2)\in\square\ \land\ S\mid T
\\\iff&\ \exists m\in O_K\,[F(9g(x_1,y_0,Z_1),g(x_2,y_0,Z_2),S,T,m)=0]
\end{align*}
by Lemma \ref{ST}. 

Let $P$ be any polynomial in $10$ variables with integer coefficients.

Suppose that $P(z_1,\ldots,z_{10})=0$ for some $z_1,\ldots,z_{10}\in\Z$ with $z_{10}\not=0$. Then
$y=2\prod_{k=1}^{10}(3z_k+1)\in\Z\sm\{0\}$ and $Z_1,Z_2\in\Z$. Hence there are $v,x_1,x_2,y_0\in\Z$ with $v\not=0$ satisfying both \eqref{w12} and \eqref{g}. Since $vz_{10}\not=0$, by Lemma \ref{Lem-DL} we have $vz_{10}\mid(2w+1)(3w+1)$ for some $w\in\Z$, thus 
\begin{equation}\label{m}F(9g(x_1,y_0,Z_1),g(x_2,y_0,Z_2),vz_{10},(2w+1)(3w+1),m)=0
\end{equation}
for some $m\in\Z$. So we have
\begin{equation}\label{P16}\tilde P(z_1,\ldots,z_{10},v,x_1,x_2,y_0,w,m)=0,
\end{equation}
where $\tilde P$ denotes the expression
\begin{align*}&(P(z_1,\ldots,z_{10})^2+(d+1)(4(2vw_1w_2-y_0)^2-3y_0^2-1)^2)^2
\\&\ +(d+1)F(9g(x_1,y_0,Z_1),g(x_2,y_0,Z_2),vz_{10},(2w+1)(3w+1),m)^2.
\end{align*}

Now assume that \eqref{P16} holds for some $z_1,\ldots,z_{10},v,x_1,x_2,y_0,w,m\in O_K$.
In light of Lemma \ref{Lem-xy}, $P(z_1,\ldots,z_{10})=0$, and both \eqref{w12} and \eqref{m} hold.
Since $-1/2,-1/3\not\in O_K$, we have $(2w+1)(3w+1)\not=0$. Thus, by Lemma \ref{Lem-ST} and the equality \eqref{m}, we have \eqref{g} and $vz_{10}\mid (2w+1)(3w+1)$. As $v\not=0$ and both \eqref{w12} and \eqref{g} hold, we have $Z_1,Z_2\in\Z$ and hence  $z_1,\ldots,z_{10}\in\Z$ by \eqref{Z}.
Note that $z_{10}\not=0$ since it divides $(2w+1)(3w+1)$. 

By the last two paragraphs, $P(z_1,\ldots,z_{10})=0$ for some $z_1,\ldots,z_{10}\in\Z$ with $z_{10}\not=0$ if and only if \eqref{P16} holds for some
$$z_1,\ldots,z_{10},v,x_1,x_2,y_0,w,m\in O_K.$$ 
Thus, by applying Theorem \ref{Th-S} we obtain the desired result.
 This completes our proof of Theorem \ref{Th1.1}.
\qed

\section{Proof of Theorem \ref{Th1.2}}
\setcounter{lemma}{0}
\setcounter{theorem}{0}
\setcounter{corollary}{0}
\setcounter{remark}{0}
\setcounter{equation}{0}

Throughout this section, we fix a squarefree integer $d>1$ and  put
 $K=\Q(\sqrt d)$. We also
let $\sigma_1,\sigma_2:K\hookrightarrow\R$ be the two real embeddings. 

\begin{lemma}\label{lem:real-separation}
If $\alpha\in O_K\setminus\Z$, then
\[
 |\sigma_1(\alpha)-\sigma_2(\alpha)|\gs\sqrt d>1.
\]
\end{lemma}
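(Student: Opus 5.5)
Write $\alpha$ in the standard integral basis of $O_K$ and compute the difference of its two conjugates directly. Take $\sigma_1$ to be the embedding sending $\sqrt d$ to the positive real root and $\sigma_2$ the one sending it to the negative root; swapping the two embeddings does not change $|\sigma_1(\alpha)-\sigma_2(\alpha)|$, so this loses nothing. For $\alpha=a+b\sqrt d$ with $a,b\in\Q$ we have
$$\sigma_1(\alpha)-\sigma_2(\alpha)=2b\sqrt d,$$
so the claim reduces to showing $|2b|\gs1$ whenever $\alpha\in O_K\sm\Z$.

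Split into two cases according to the description of $O_K$ recalled in the introduction.

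If $d\not\eq1\pmod4$, then $\alpha=a+b\sqrt d$ with $a,b\in\Z$. The hypothesis $\alpha\not\in\Z$ forces $b\ne0$, hence $|b|\gs1$ and $|\sigma_1(\alpha)-\sigma_2(\alpha)|=2|b|\sqrt d\gs2\sqrt d$.

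If $d\eq1\pmod4$, then $\alpha=(a+b\sqrt d)/2$ with $a,b\in\Z$ and $a\eq b\pmod2$. Again $\alpha\not\in\Z$ forces $b\ne0$: if $b=0$ then $a$ is even and $\alpha=a/2\in\Z$. Now the difference of conjugates is $|b|\sqrt d\gs\sqrt d$.

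In both cases $|\sigma_1(\alpha)-\sigma_2(\alpha)|\gs\sqrt d$. The final inequality $\sqrt d>1$ holds because $d>1$. There is no real obstacle here; the only points needing care are the halving in the case $d\eq1\pmod4$, which is exactly why the bound is $\sqrt d$ rather than $2\sqrt d$, and noting that $\alpha\not\in\Z$ gives $b\ne0$ in each case.
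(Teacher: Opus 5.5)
Your proposal is correct and takes essentially the same approach as the paper: both write $\alpha$ in the standard integral basis and compute the difference of conjugates directly, obtaining $2|b|\sqrt d$ when $d\not\eq1\pmod4$ and $|b|\sqrt d$ when $d\eq1\pmod4$ with $b\neq0$. The only cosmetic difference is that the paper writes $\alpha=a+b(1+\sqrt d)/2$ in the second case, whereas you use the half-integer form $(a+b\sqrt d)/2$ with $a\eq b\pmod2$.
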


\begin{proof}
If $d\not\equiv1\pmod4$, write $\alpha=a+b\sqrt d$ with $a,b\in\Z$. Since $\alpha\notin\Z$, one has $b\ne0$, and the difference of the two conjugates has absolute value $2|b|\sqrt d$. If $d\equiv1\pmod4$, write $\alpha=a+b(1+\sqrt d)/2$ with $a,b\in\Z$. Again $b\ne0$, and the difference has absolute value $|b|\sqrt d$.
\end{proof}

\begin{lemma}\label{Lem3.2}
Let $z,x_1,\ldots,x_m,c\in O_K$ with $m\gs0$ and $\sigma_i(c)\gs1$ for $i=1,2$.
Assume that
\[
 Y=2c\prod_{j=1}^m(x_j^2+1)
\]
is a positive rational integer. If
\[
 z+\sum_{j=1}^m\frac{x_j}{Y^j}\in\Q,
\]
then $z\in\Z$.
\end{lemma}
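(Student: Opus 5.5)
The plan is to apply the ``difference of conjugates'' map $\Delta(\alpha)=\sigma_1(\alpha)-\sigma_2(\alpha)$ to the rational number $q=z+\sum_{j=1}^m x_j/Y^j$. Then I will combine the lower bound of Lemma \ref{lem:real-separation} for $|\Delta(z)|$ with an upper bound for the tail coming from the size of $Y$.

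The case $m=0$ is immediate: $z\in\Q\cap O_K=\Z$. So assume $m\gs1$, and suppose, for contradiction, that $z\notin\Z$. Since $q\in\Q$ and $Y\in\Z^+$, both are fixed by $\sigma_1$ and $\sigma_2$. Applying $\Delta$ gives
\[
\Delta(z)=-\sum_{j=1}^m\frac{\Delta(x_j)}{Y^j}.
\]
By Lemma \ref{lem:real-separation}, $|\Delta(z)|\gs\sqrt d>1$.

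Next I bound each $|\Delta(x_j)|$ in terms of $Y$. Since $\sigma_i(Y)=Y$ for $i=1,2$, we have
\[
Y=2\sigma_i(c)\prod_{k=1}^m\bigl(\sigma_i(x_k)^2+1\bigr).
\]
Every factor $\sigma_i(x_k)^2+1$ is at least $1$ because the embeddings are real, and $\sigma_i(c)\gs1$ by hypothesis. Hence $Y\gs 2(\sigma_i(x_j)^2+1)\gs 4|\sigma_i(x_j)|$, using $t^2+1\gs 2|t|$ for real $t$. This gives $|\sigma_i(x_j)|\ls Y/4$ for $i=1,2$, and therefore $|\Delta(x_j)|\ls Y/2$. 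The same display also shows $Y\gs2$.

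Finally I estimate the tail:
\[
|\Delta(z)|\ls\sum_{j=1}^m\frac{Y/2}{Y^j}=\frac12\sum_{j=0}^{m-1}Y^{-j}<\frac12\cdot\frac{1}{1-1/Y}\ls 1.
\]
The last inequality uses $Y\gs2$. This contradicts $|\Delta(z)|>1$, so $z\in\Z$.

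The only delicate point is the step bounding $|\Delta(x_j)|$ by $Y/2$. It relies on the embeddings being real, so that every factor $\sigma_i(x_k)^2+1$ is at least $1$ and positivity is preserved. This is exactly why the hypothesis $\sigma_i(c)\gs1$ and the form $x_j^2+1$ are built into $Y$. Once that bound is in place, the rest is a geometric-series estimate, and the constant $1/2$ leaves room to spare against $\sqrt d>1$.
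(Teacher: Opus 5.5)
Your proof is correct and follows essentially the same route as the paper: both use $Y\gs 2(\sigma_i(x_j)^2+1)\gs 4|\sigma_i(x_j)|$ and a geometric-series estimate (with $Y\gs2$) to get $|\sigma_1(z)-\sigma_2(z)|<1$, and then invoke Lemma \ref{lem:real-separation}. The only cosmetic difference is that the paper bounds the tail under each embedding by $\frac12$ separately, whereas you first bound $|\sigma_1(x_j)-\sigma_2(x_j)|$ by $Y/2$ and phrase the conclusion as a contradiction.
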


\begin{proof}
For $i=1,2$ write $x_j^{(i)}=\sigma_i(x_j)$. Since $t^2+1\gs2|t|$ for every real $t$, and every omitted factor has real value at least $1$, we have
\[
 Y\gs2\bigl((x_j^{(i)})^2+1\bigr)\gs4|x_j^{(i)}|.
\]
Therefore
\[
 \left|\sum_{j=1}^m\frac{x_j^{(i)}}{Y^j}\right|
 \ls\frac14\sum_{j=1}^mY^{1-j}<\frac12,
\]
with the assertion immediate when $m=0$. The assumed rationality then gives
$|\sigma_1(z)-\sigma_2(z)|<1$. Thus $z\in\Z$ by Lemma \ref{lem:real-separation}.
\end{proof}

Now we can give an important auxiliary theorem, which is similar to Lemma \ref{Lem-MS}
(i.e., \cite[Theorem 1.2]{JNT}). 

\begin{theorem}\label{Th3.1}
Let $x_1,\ldots,x_n\in O_K$, and define
\begin{equation}\label{eq:realY}
 Y=2\prod_{j=1}^n(x_j^2+1)
\end{equation}
and
\begin{equation}\label{eq:realW}
 W=Y^{n+1}+\sum_{j=1}^n x_jY^{n-j}.
\end{equation}
Then
\[
 W\in\Q\quad\Longleftrightarrow\quad x_1,\ldots,x_n\in\Z.
\]
\end{theorem}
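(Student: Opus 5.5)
We prove the two implications separately; the direction $\Leftarrow$ is trivial. If all $x_j\in\Z$, then $Y\in\Z$ and $W$ is a polynomial expression in rational integers, so $W\in\Z\subseteq\Q$. For $\Rightarrow$, assume $W\in\Q$, so $W\in\Q\cap O_K=\Z$. The plan has two stages. First we show that $Y\in\Z$. Then we peel off $x_1,x_2,\ldots$ one at a time using Lemma \ref{Lem3.2}.

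\emph{Stage 1: $Y\in\Z$.} Fix $i\in\{1,2\}$ and write $Y_i=\sigma_i(Y)$ and $x_j^{(i)}=\sigma_i(x_j)$. Every factor $(x_j^{(i)})^2+1$ is at least $1$, so $Y_i\gs2$. As in the proof of Lemma \ref{Lem3.2}, we also have $Y_i\gs4|x_j^{(i)}|$ for each $j$. Hence
\[
\Bigl|\sum_{j=1}^n x_j^{(i)}Y_i^{n-j}\Bigr|\ls\frac14\sum_{j=1}^nY_i^{n-j+1}<\frac{Y_i^n}{4}\cdot\frac{1}{1-1/Y_i}\ls\frac{Y_i^n}{2}.
\]
The middle inequality is strict when $n\gs1$; the case $n=0$ is trivial. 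This gives
\[
Y_i^{n+1}-\tfrac12Y_i^n<W<Y_i^{n+1}+\tfrac12Y_i^n .
\]
We compare these bounds with powers. On one side, $(1-\frac1{2Y_i})^{n+1}\ls1-\frac1{2Y_i}$, so $(Y_i-\frac12)^{n+1}\ls Y_i^{n+1}-\frac12Y_i^n$. On the other side, the binomial expansion gives $(Y_i+\frac12)^{n+1}\gs Y_i^{n+1}+\frac12Y_i^n$. Since $W\in\Z$ is fixed (and $W>0$), it follows that $Y_i-\frac12<W^{1/(n+1)}<Y_i+\frac12$ for both $i=1,2$. Therefore $|\sigma_1(Y)-\sigma_2(Y)|<1$, and Lemma \ref{lem:real-separation} gives $Y\in\Z$. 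Clearly $Y$ is positive.

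\emph{Stage 2: induction.} Since $Y$ is a nonzero rational integer, we get
\[
Y\Bigl(\frac{W}{Y^n}-Y\Bigr)=x_1+\sum_{j=2}^n\frac{x_j}{Y^{j-1}}\in\Q .
\]
We now apply Lemma \ref{Lem3.2} with $z=x_1$, with the variables $x_2,\ldots,x_n$ in place of $x_1,\ldots,x_m$ (so $m=n-1$), and with $c=x_1^2+1$. Then $\sigma_i(c)=\sigma_i(x_1)^2+1\gs1$, and $Y=2c\prod_{j\gs2}(x_j^2+1)$ is exactly our positive rational integer. The lemma yields $x_1\in\Z$.

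Next we subtract $x_1$ and multiply by $Y$. This gives $x_2+\sum_{j\gs3}x_j/Y^{j-2}\in\Q$. Applying Lemma \ref{Lem3.2} again with $c=(x_1^2+1)(x_2^2+1)$ gives $x_2\in\Z$. In general, at step $k$ we take $c=\prod_{j\ls k}(x_j^2+1)$. Continuing in this way gives $x_1,\ldots,x_n\in\Z$.

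The main obstacle is Stage 1, namely obtaining the strict inequality $|\sigma_1(Y)-\sigma_2(Y)|<1$ from $W\in\Z$. This requires sharp enough bounds on $Y_i$ in terms of $W^{1/(n+1)}$, and the strictness comes from the strict geometric-series estimate above. Once $Y\in\Z$ is known, Stage 2 is a mechanical use of Lemma \ref{Lem3.2}.
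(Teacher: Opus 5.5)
Your proof is correct and follows the paper's structure. You use the same bound $|R_i|<Y_i^n/2$, the same appeal to Lemma \ref{lem:real-separation} to get $Y\in\Z$, and the same induction via Lemma \ref{Lem3.2} with $c=\prod_{j\le k}(x_j^2+1)$. The only (minor) variation is in Stage 1: you place both $\sigma_i(Y)$ within $\frac12$ of $W^{1/(n+1)}$ to get $|\sigma_1(Y)-\sigma_2(Y)|<1$ directly. The paper instead assumes $Y_1>Y_2$, uses $Y_1-Y_2>1$, and factors $Y_1^{n+1}-Y_2^{n+1}$ to reach a contradiction.
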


\begin{proof}
The forward implication from integral $x_j$ is immediate. Conversely, suppose $W\in\Q$. Put
\[
 Y_i=\sigma_i(Y)>0,
 \qquad
 R_i=\sum_{j=1}^n\sigma_i(x_j)Y_i^{n-j}
 \qquad(i=1,2).
\]
As above, $|\sigma_i(x_j)|\ls Y_i/4$. Since $Y_i\gs2$, we have
\begin{equation}\label{eq:Rbound}
 |R_i|\ls\frac{Y_i^n}{4}\sum_{r=0}^{n-1}Y_i^{-r}<\frac{Y_i^n}{2}.
\end{equation}
We claim $Y_1=Y_2$. If $Y_1>Y_2$, then $Y\notin\Z$, and Lemma \ref{lem:real-separation} gives $Y_1-Y_2>1$. Hence
\[
 Y_1^{n+1}-Y_2^{n+1}
 =(Y_1-Y_2)\sum_{r=0}^{n}Y_1^{n-r}Y_2^r>Y_1^n.
\]
On the other hand, $\sigma_1(W)=\sigma_2(W)$ and \eqref{eq:Rbound} give
\[
 Y_1^{n+1}-Y_2^{n+1}=R_2-R_1
 <\frac{Y_1^n+Y_2^n}{2}<Y_1^n,
\]
a contradiction. The other ordering is symmetric. Hence $Y_1=Y_2$, so $Y\in\Q\cap O_K=\Z$; positivity gives $Y\in\Z^+$.

Dividing \eqref{eq:realW} by $Y^n$ yields
\begin{equation}\label{eq:real-series}
 \sum_{j=1}^n\frac{x_j}{Y^j}\in\Q.
\end{equation}
Suppose inductively that $x_1,\ldots,x_{m-1}\in\Z$. Subtracting their terms from \eqref{eq:real-series} and multiplying by $Y^m$ gives
\[
 x_m+\sum_{j=m+1}^n\frac{x_j}{Y^{j-m}}\in\Q.
\]
Also, 
\[
 Y=2c_m\prod_{j=m+1}^n(x_j^2+1),
 \qquad
 c_m=\prod_{j=1}^{m}(x_j^2+1),
\]
and both real embeddings of $c_m$ are at least $1$. By Lemma \ref{Lem3.2},  $x_m\in\Z$. Induction proves the theorem.
\end{proof}

Choose a nontrivial positive integer solution $a_0^2-db_0^2=1$. Replacing the corresponding unit by its square, put
\[
 A=a_0^2+db_0^2=2a_0^2-1,
 \qquad
 B=2a_0b_0,
 \qquad
 E=A^2-1=dB^2.
\]
Then $A$ is odd and $E$ is even. Define integer sequences $X_n,Y_n$ by
\begin{equation}\label{eq:denef-seq}
 X_n+Y_n\sqrt E=(A+\sqrt E)^n\qquad(n\ge0).
\end{equation}
In particular $X_n$ is odd for every $n$.

\begin{lemma}[Denef \cite{D75}]\label{lem:denef}

{\rm (i)} If $x,y\in O_K$ satisfy $x^2-Ey^2=1$, then $y^2\in\N$.

{\rm (ii)} For  $n,k\in\Z^+$, we have
\[
 Y_{nk}^2\equiv k^2Y_n^2\pmod{Y_n^4}.
\]
\end{lemma}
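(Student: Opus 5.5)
For part (i), the plan is to use the arithmetic of $O_K$ and the fact that $E=dB^2$ with $d$ squarefree. Given $x,y\in O_K$ with $x^2-Ey^2=1$, put $\eta=x+y\sqrt E$. Since $\sqrt E=B\sqrt d\in K$, $\eta\in O_K$. Its norm is
\[
N(\eta)=\sigma_1(\eta)\sigma_2(\eta)=\bigl(\sigma_1(x)+\sigma_1(y)\sigma_1(\sqrt E)\bigr)\bigl(\sigma_2(x)+\sigma_2(y)\sigma_2(\sqrt E)\bigr).
\]
This mixes conjugates, so it does not directly give $x^2-Ey^2$. The cleaner route is to work in the biquadratic ring $O_K[\sqrt E']$, where $\sqrt E'$ is a formal square root of $E$ as an element of $K$. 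Since $E$ is already a square in $K$, that ring is not a field, and instead one factors directly:
\[
x^2-Ey^2=(x-B\sqrt d\,y)(x+B\sqrt d\,y)=1 .
\]
Hence $\varepsilon=x+B\sqrt d\,y$ is a unit of $O_K$ and $x-B\sqrt d\,y=\varepsilon^{-1}$. Therefore
\[
y=\frac{\varepsilon-\varepsilon^{-1}}{2B\sqrt d},\qquad y^2=\frac{(\varepsilon-\varepsilon^{-1})^2}{4E}.
\]
Next write $\varepsilon=\pm\epsilon_0^k$ with $\epsilon_0$ the fundamental unit. It remains to show $y^2\in\Q$, and then, since $y^2\in O_K\cap\Q=\Z$, that $y^2\ge0$.

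For rationality, the plan is to apply conjugation $\tau$. We have $\tau(\varepsilon)=\pm N(\varepsilon)\varepsilon^{-1}$, with $N(\varepsilon)=\pm1$.
\begin{itemize}
\item If $N(\varepsilon)=1$, then $\tau(\varepsilon-\varepsilon^{-1})=-(\varepsilon-\varepsilon^{-1})$. Its square is therefore $\tau$-invariant, so $y^2\in\Q$.
\item If $N(\varepsilon)=-1$, then $(\varepsilon-\varepsilon^{-1})$ is $\tau$-invariant, i.e.\ rational, while $\sqrt d$ is anti-invariant. This makes $y\in\Q\sqrt d$, so again $y^2\in\Q$, but now $y^2=-(\text{rational})^2\cdot d^{-1}\cdots$ could be negative.
\end{itemize}
For the sign, the key point is that $y=\frac{\varepsilon-\varepsilon^{-1}}{2B\sqrt d}$. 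In the norm $+1$ case, $\varepsilon-\varepsilon^{-1}=t\sqrt d$ with $t$ rational, so $y=t/(2B)$ is real rational and $y^2\ge0$. In the norm $-1$ case, $\varepsilon-\varepsilon^{-1}$ is rational, so $y=r/(2B\sqrt d)$ and $y^2=r^2/(4E)\ge0$ as well. Either way $y^2$ is a nonnegative rational algebraic integer, hence $y^2\in\N$. This last case analysis is the step I expect to require the most care, namely checking that $\tau(\varepsilon^{-1})$ behaves as claimed.

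For part (ii), the plan is to expand $(X_n+Y_n\sqrt E)^k$ binomially:
\[
Y_{nk}=\sum_{j\ \mathrm{odd}}\binom kj X_n^{k-j}Y_n^jE^{(j-1)/2}\equiv kX_n^{k-1}Y_n\pmod{Y_n^3}.
\]
Squaring gives $Y_{nk}^2\equiv k^2X_n^{2(k-1)}Y_n^2\pmod{Y_n^4}$. Since $X_n^2=1+EY_n^2\equiv1\pmod{Y_n^2}$, we have $X_n^{2(k-1)}Y_n^2\equiv Y_n^2\pmod{Y_n^4}$, which yields the claim. This part is routine.
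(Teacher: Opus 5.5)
Your proposal is correct. The paper gives no proof of this lemma; it is simply quoted from Denef, so for part (i) there is nothing in the paper to compare against.

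\textbf{Part (ii).} This is the same binomial-expansion argument the paper uses for Lemma~\ref{lem:linear-pell}. Your squaring step is valid, but say why: the main term $kX_n^{k-1}Y_n$ is divisible by $Y_n$, so the cross term in the square is divisible by $Y_n\cdot Y_n^3=Y_n^4$. Also note that comparing coefficients of $\sqrt E$ uses $\sqrt E=B\sqrt d\notin\Q$.

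\textbf{Part (i).} Your key observation is the right one. Since $E=dB^2$ is a square in $K$, you have $(x-B\sqrt d\,y)(x+B\sqrt d\,y)=1$ in $O_K$. So $\varepsilon=x+B\sqrt d\,y$ is a unit and $y=(\varepsilon-\varepsilon^{-1})/(2B\sqrt d)$.

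The write-up needs some cleaning:
\begin{enumerate}
\item Delete the abandoned detour through $N(\eta)$ and the ``biquadratic ring''.
\item Delete the unused $\varepsilon=\pm\epsilon_0^k$.
\item Replace $\tau(\varepsilon)=\pm N(\varepsilon)\varepsilon^{-1}$ by the exact identity $\tau(\varepsilon)=N(\varepsilon)\varepsilon^{-1}$, which is what your case analysis actually uses.
\end{enumerate}

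The worry that $y^2$ ``could be negative'' in the norm $-1$ case is a false alarm. Since $K$ is real, $y^2$ is nonnegative under both embeddings. So once $y^2\in\Q\cap O_K=\Z$, it lies in $\N$ automatically.

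You can also drop the case split. Since $N(\varepsilon^2)=1$, you have $\varepsilon^{-2}=\tau(\varepsilon^2)$. Hence $y^2=(\varepsilon^2+\tau(\varepsilon^2)-2)/(4E)\in\Q$ directly.
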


\begin{lemma}\label{lem:linear-pell}
For any positive integers $n,k$ with $2\nmid k$, we have
\[
 Y_{nk}\equiv kY_n\pmod{Y_n^3}.
\]
\end{lemma}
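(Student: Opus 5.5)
Expand $(A+\sqrt E)^{nk}=(X_n+Y_n\sqrt E)^k$ by the binomial theorem and compare the coefficients of $\sqrt E$. This gives
$$Y_{nk}=\sum_{\substack{0\ls i\ls k\\ 2\nmid i}}\binom ki X_n^{k-i}Y_n^{i}E^{(i-1)/2}.$$
The term with $i=1$ is $kX_n^{k-1}Y_n$. Every term with $i\gs3$ carries the factor $Y_n^3$. Hence
$$Y_{nk}\equiv kX_n^{k-1}Y_n\pmod{Y_n^3},$$
and it remains to show that $kX_n^{k-1}Y_n\equiv kY_n\pmod{Y_n^3}$. Equivalently, we need $X_n^{k-1}\equiv1\pmod{Y_n^2}$.

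This last congruence is where the hypothesis $2\nmid k$ enters. Taking norms in \eqref{eq:denef-seq} gives $X_n^2-EY_n^2=1$, so $X_n^2\equiv1\pmod{Y_n^2}$. Since $k$ is odd, $k-1$ is even, and therefore
$$X_n^{k-1}=(X_n^2)^{(k-1)/2}\equiv1\pmod{Y_n^2}.$$
Multiplying by $kY_n$ then yields $Y_{nk}\equiv kY_n\pmod{Y_n^3}$, as desired.

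For even $k$ one would have $X_n^{k-1}\equiv X_n\pmod{Y_n^2}$, which is generally not $1$. This explains both the parity assumption and the weaker congruence in Lemma \ref{lem:denef}(ii): squaring $Y_{nk}\equiv kX_n^{k-1}Y_n$ gives $Y_{nk}^2\equiv k^2Y_n^2\pmod{Y_n^4}$ for all $k$.

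There is no real obstacle here. The only step needing care is the binomial bookkeeping, namely confirming that the odd-index terms with $i\gs3$ are divisible by $Y_n^3$. This holds because each contains $Y_n^i$ with $i\gs3$, and all the other factors ($X_n$, $E$, and the binomial coefficients) are integers.
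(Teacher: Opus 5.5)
Your proof is correct and follows essentially the same route as the paper: expand $(X_n+Y_n\sqrt E)^k$, keep only the linear term $kX_n^{k-1}Y_n$, and use $X_n^2=1+EY_n^2$ with $k-1$ even to get $X_n^{k-1}\equiv1\pmod{Y_n^2}$. The only cosmetic difference is that the paper first divides by $Y_n$ and works modulo $Y_n^2$, whereas you work modulo $Y_n^3$ directly, which avoids having to note that $Y_n\neq0$.
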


\begin{proof}
Expanding $(X_n+Y_n\sqrt E)^k$ and comparing coefficients of $\sqrt E$ gives
\[
 \frac{Y_{nk}}{Y_n}
 =\sum_{j=0}^{(k-1)/2}
 \binom{k}{2j+1}X_n^{k-2j-1}E^jY_n^{2j}.
\]
Modulo $Y_n^2$, only the $j=0$ term remains. Thus
\[
 \frac{Y_{nk}}{Y_n}\equiv kX_n^{k-1}\pmod{Y_n^2}.
\]
Since $k-1$ is even and $X_n^2=1+EY_n^2$, one has $X_n^{k-1}\equiv1\pmod{Y_n^2}$. Multiplication by $Y_n$ proves the assertion.
\end{proof}

\begin{lemma}\label{lem:pell-div}
For every $H\in\Z\sm\{0\}$, we have  $H\mid Y_n$ for some $n\in\Z^+$. Such $n$ can be chosen arbitrarily large.
\end{lemma}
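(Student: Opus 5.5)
Since $Y_n$ is defined by the unit $\eta=A+\sqrt E$ of norm $1$, the plan is to show that the sequence $(Y_n)$ is eventually periodic modulo $H$ with $Y_0=0$ lying in the periodic part. This gives infinitely many $n$ with $H\mid Y_n$. We may assume $|H|\gs2$: for $H=\pm1$ every $n\in\Z^+$ works.

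First, comparing coefficients in $(X_{n+1}+Y_{n+1}\sqrt E)=(A+\sqrt E)(X_n+Y_n\sqrt E)$ gives the recursions
\[
 X_{n+1}=AX_n+EY_n,\qquad Y_{n+1}=X_n+AY_n.
\]
These can be written as $(X_{n+1},Y_{n+1})^T=M(X_n,Y_n)^T$ with $M=\begin{pmatrix}A&E\\1&A\end{pmatrix}$. Here $\det M=A^2-E=1$, so $M$ is invertible modulo $H$, with inverse $\begin{pmatrix}A&-E\\-1&A\end{pmatrix}$.

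Next, consider the pairs $(X_n \bmod H,\,Y_n\bmod H)$ for $n\gs0$. They lie in the finite set $(\Z/H\Z)^2$, so $(X_i,Y_i)\equiv(X_j,Y_j)\pmod H$ for some $0\ls i<j$. Applying $M^{-1}$ modulo $H$ a total of $i$ times gives $(X_{j-i},Y_{j-i})\equiv(X_0,Y_0)=(1,0)\pmod H$. Equivalently, since $M^{j-i}$ has integer entries and $M^{j-i}\equiv I\pmod H$, one can argue directly that the orbit is purely periodic.

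Put $p=j-i\in\Z^+$. By induction, $(X_{kp},Y_{kp})\equiv(1,0)\pmod H$ for every $k\in\Z^+$. Then $H\mid Y_{kp}$ for all $k\gs1$, and the indices $n=kp$ are unbounded, which gives the second claim.

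An alternative route is to use Lemma \ref{lem:linear-pell} or the divisibility $Y_n\mid Y_{nk}$, but these do not produce divisibility by an arbitrary $H$. The pigeonhole argument on $M$ is the cleanest. The only point that needs care is the invertibility of $M$ modulo $H$, which comes from $\det M=1$, i.e.\ from $\eta$ having norm $1$. This step carries the substance of the proof, but it is immediate.
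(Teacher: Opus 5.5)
Your proof is correct and is essentially the paper's argument. The paper also uses the matrix $M$ with $\det M=1$ and observes that its image in $\mathrm{GL}_2(\Z/|H|\Z)$ has finite order $r$, so $M^r\equiv I\pmod{|H|}$ and $H\mid Y_{kr}$ for all $k\gs1$; your pigeonhole step combined with the invertibility of $M$ modulo $H$ is simply a proof of that finite-order fact.
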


\begin{proof} This is trivial for $H=\pm1$. 

Now assume that $|H|\gs2$. Then the matrix
\[
 M=\begin{pmatrix}A&E\\1&A\end{pmatrix}
\]
has determinant $1$, and satisfies
\[
 M^n=\begin{pmatrix}X_n&EY_n\\Y_n&X_n\end{pmatrix}.
\]
Its image in the finite group $\operatorname{GL}_2(\Z/|H|\Z)$ has finite order. Hence $M^r\equiv I\pmod{|H|}$ for some $r$, and $H\mid Y_r$. Every positive multiple of $r$ has the same divisibility property.
\end{proof}

Based on the above lemmas and Theorem \ref{Th3.1}, we can prove Theorem \ref{Th1.2} completely
in a way similar to the proof of Theorem \ref{Th1.1}. We omit the details.

\end{document}